\documentclass[11pt,a4paper]{article}
\usepackage{amsthm,mathrsfs,amsmath,amssymb}
\usepackage{dsfont}
\usepackage{indentfirst}
\usepackage{cite}
\usepackage[colorlinks=true]{hyperref}
\hypersetup{urlcolor=blue, citecolor=red}
\numberwithin{equation}{section}
\allowdisplaybreaks
\date{}

\newtheorem{thm}{Theorem}[section]
\newtheorem{lem}[thm]{Lemma}

\newtheorem{xrem}[thm]{\bf Remark}
\newenvironment{prf}{\noindent {\bf Proof.}\rm}{\qed}
\newenvironment{prf1}{\noindent {\bf Proof of Theorem \ref{T1}.}\rm}{\qed}
\newenvironment{prf2}{\noindent {\bf Proof of Theorem \ref{T2}.}\rm}{\qed}
\newenvironment{prf3}{\noindent {\bf Proof of Theorem \ref{T3}.}\rm}{\qed}

\title{\bf Multiplicity and asymptotics of positive solutions for critical-concave Kirchhoff equation\footnote{Supported by National Natural Science Foundation of China (No.~12301143, No.~12261079, No.~12561019).}}

\author{{Zhi-Yun Tang$^a$, Gui-Dong Li$^b$, Yong-Yong Li$^c$\footnote{Corresponding author. E-mail address: mathliyy518@163.com}}\\
{\small\emph{$^a$School of Mathematics and Statistics, Southwest University, Chongqing, {\rm400715}, China,}}\\
{\small\emph{$^b$School of Mathematics and Statistics, Guizhou University, Guiyang, {\rm550025}, China,}}\\
{\small\emph{$^c$College of Mathematics and Statistics, Northwest Normal University, Lanzhou, {\rm730070}, China}}}

\begin{document}

\maketitle

\baselineskip 16.5pt

\setcounter{page}{1}

\begin{quote}
  {\bf Abstract:}\ This paper focuses on the critical Kirchhoff equation with concave perturbation
  \begin{align*}
  \begin{cases}
  \displaystyle  -\Big(a+b\int_\Omega|\nabla u|^2dx\Big)\Delta u=|u|^4u+\lambda|u|^{q-2}u\ \ &\mbox{in}\ \Omega,\\
  \displaystyle u=0\ \  &\mbox{on}\ \partial\Omega,
  \end{cases}
  \end{align*}
  where $\Omega$ is a smooth bounded domain in $\mathbb{R}^3$, $a,b,\lambda>0$ and $1<q<2$.
  By the constrained minimization methods, the mountain pass theorem and the concentration-compactness principle,
  we verify the multiplicity of positive solutions for $\lambda>0$ small enough.
  Moreover, we analyse the asymptotic behaviour of positive solutions as $b\rightarrow0$ and $\lambda\rightarrow0$, respectively.
  This work is a counterpart of [A. Ambrosetti et al., J.~Funct.~Anal. 1994] for the Kirchhoff equation.
  It is noteworthy that we don't require that $b>0$ is small enough here, which is imposed in the existing literatures to make refined estimates for the mountain pass level.

  {\bf Keywords:}\ Sobolev critical Kirchhoff equation; Concave perturbation; Multiple positive solutions; Asymptotics
\end{quote}

{\bf MSC (2020) :} 35J20; 35B33; 35B38; 35D30

\section{Introduction and main results}

In this paper, we are concerned with the existence and multiplicity of positive solutions to the following semilinear elliptic equation:
  \begin{align}\label{K2}
  \begin{cases}
  \displaystyle  -\Big(a+b\int_\Omega|\nabla u|^2dx\Big)\Delta u=f(x,u) &\mbox{in}\ \Omega,\\
  \displaystyle u=0 &\mbox{on}\ \partial\Omega,
  \end{cases}
  \end{align}
where $\Omega\subset\mathbb{R}^3$ is a smooth bounded domain, $a,b\geq 0$, $a+b>0$ and $f\in C(\Omega\times\mathbb{R},\mathbb{R})$. When $b>0$, Eq.~\eqref{K2} is called as Kirchhoff type equation.
In the physical context, Kirchhoff equation was proposed by Kirchhoff in \cite{K} to generalize the classical D'Alembert wave equations and study the steady state of  hyperbolic equation for the free vibrations of elastic strings. As we all know, Kirchhoff equation is of nonlocal type, since it takes into account the changes in length of the string produced by transverse vibrations. From the perspective of  variational method, the nonlocal term $\int_\Omega|\nabla u|^2dx$ in Eq.~\eqref{K2} may provoke some interesting mathematical difficulties, which draw many scholars' great attention after the Lion's pioneering work \cite{L}, in which an abstract functional analysis framework was established. As for Eq.~\eqref{K2} with $b>0$, there are some existence and nonexistence results on nontrivial solution presented in \cite{CXW,CKW,CT,CWL,K,L,LLG,LPKT,N,SL,XWT,ZL,ZT,ZF}.
Specially, D. Naimen \cite{N} studied the Sobolev critical Kirchhoff equation
  \begin{align*}
  \begin{cases}
  \displaystyle  -\Big(a+b\int_\Omega|\nabla u|^2dx\Big)\Delta u=|u|^4u+\lambda|u|^{p-2}u &\mbox{in}\ \Omega,\\
  \displaystyle u=0 &\mbox{on}\ \partial\Omega,
  \end{cases}
  \end{align*}
where $\Omega\subset\mathbb{R}^3$ is a bounded domain with smooth boundary, $a, \lambda \geq 0$ and $2\leq p<6$ (6 is the Sobolev critical exponent).
Therein, the nonexistence of nontrivial solution was proved if $\lambda=0$, $a\geq0$ and $\Omega$ is strictly star-shaped, nontrivial solution was obtained for any $\lambda>0$, $p\in(4,6)$ and $a\geq0$, meanwhile, positive solution was derived for any $a>0$, $p\in(2,4]$ and sufficiently large $\lambda>0$. Moreover, D.~Naimen proved a positive solution for any $p\in(2,4)$ and sufficiently small $\lambda>0$ if $a=0$, which was extended to the case of $p=4$ in \cite{ZL}, where multiple nontrivial solutions were obtained if $\lambda$ belongs to a small  left neighborhood of eigenvalue for the operator $-\int_\Omega|\nabla u|^2dx\Delta$. In \cite{ZT}, X.-J. Zhong and C.-L. Tang  studied the case of $p=2$ and $a=1$, they proved the existence of  multiple positive solutions and positive ground state solution when $\lambda$ falls in a small right neighborhood of the first eigenvalue of the operator $-\Delta$.

When $b=0$, Eq.~\eqref{K2} reduces to be a classical Schr$\ddot{\mbox{o}}$dinger equation.
For the Sobolev critical case we concerned here, there are three celebrated works \cite{ABC,BN,P}.
As a starting point, S.I. Poho$\check{\mbox{z}}$aev \cite{P} showed that if $\Omega\subset\mathbb{R}^3$ is strictly star-shaped there is no solution for the equation
\begin{align*}
\begin{cases}
\displaystyle -\Delta u=u^5 &\mbox{in}\ \Omega,\\
\displaystyle u>0 &\mbox{in}\ \Omega, \\
\displaystyle u=0 &\mbox{on}\ \partial\Omega.
\end{cases}
\end{align*}
Later, H. Br$\acute{\mbox{e}}$zis and L. Nirenberg reversed the above well-known nonexistence result by adding lower-order terms in \cite{BN}.
Concretely, the linear perturbed term $\lambda u$ was firstly added to $u^5$, when $\Omega$ is a ball they proved that there exists solution iff $\lambda\in(\frac14\lambda_1,\lambda_1)$, where $\lambda_1$ is the first eigenvalue of $-\Delta$. Further, the superlinear perturbed term $\mu u^s$ $(1<s<5\ \mbox{and}\ \mu>0)$ was added to $u^5$, they proved that there exists solution for any $\mu>0$ if $3<s<5$, or for $\mu>0$ large enough if $1<s\leq 3$. After this, A. Ambrosetti, H. Br$\acute{\mbox{e}}$zis and G. Cerami \cite{ABC} added sublinear (concave) perturbed term, namely, they studied the Schr$\ddot{\mbox{o}}$dinger equation
\begin{align}\label{K11}
\begin{cases}
\displaystyle -\Delta u=u^{p-1}+\lambda u^{q-1} &\mbox{in}\ \Omega, \\
\displaystyle u>0 &\mbox{in}\ \Omega, \\
\displaystyle u=0 &\mbox{on}\ \partial\Omega,
\end{cases}
\end{align}
where $\Omega\subset\mathbb{R}^3$ is a bounded domain with smooth boundary $\partial\Omega$, $\lambda>0$ and $1<q<2<p\leq 6$, such nonlinearity is called as concave-convex nonlinearity if $p<6$ and critical-concave nonlinearity if $p=6$. Based on the sub- and supersolution method, they proved that Eq.~\eqref{K11} has solution iff $\lambda\in(0,\lambda_*]$ for some $\lambda_*>0$. Indeed, for $\lambda>0$ sufficiently small, observe that the energy functional of Eq.~\eqref{K11} has at least two positive critical points in the direction of any nontrivial $u$ under the fibering map $tu$, then it is natural to look for two positive solutions for Eq.~\eqref{K11}. Such expected result was confirmed in \cite{ABC}. Precisely, for any $\lambda\in(0,\lambda_*)$, Ambrosetti et al.~proved that Eq.~\eqref{K11} has a first solution $u_\lambda$ which is a local minimizer with negative energy. Having truncated the concave-convex nonlinearity, by applying the mountain pass theorem they further obtained a second solution $v_\lambda$ for any $\lambda\in(0,\lambda_*)$. Moreover, they showed that, in the sense of subsequence, $|u_\lambda|_\infty\rightarrow0$ as $\lambda\to0$, while $|v_\lambda|_\infty\to+\infty$ as $\lambda\to0$ if $\Omega$ is strictly star-shaped. Afterwards, many researchers tried their best to extend those results in \cite{ABC} to various elliptic equations. As for Kirchhoff equations, we refer readers to \cite{CXW,CWL,CKW,LLG,LPKT,SL}. Particularly, when it comes to the critical case of Eq.~\eqref{K11} studied in \cite{ABC}, a counterpart for Kirchhoff equation is
\begin{align}\label{K1}
\begin{cases}
\displaystyle  -\Big(a+b\int_\Omega|\nabla u|^2dx\Big)\Delta u=|u|^4u+\lambda|u|^{q-2}u &\mbox{in}\ \Omega,\\
\displaystyle u=0 &\mbox{on}\ \partial\Omega,
\end{cases}
\end{align}
where $\Omega\subset \mathbb{R}^3$ is a bounded domain with smooth boundary, the parameters $a,b,\lambda>0$ and $1<q<2$.
Similar to the existence result in \cite{ABC}, an open question left in many scholars' mind is
\begin{enumerate}
  \item [$\mathbf{(Q)}$] whether Eq.~\eqref{K1} has two positive solutions for $\lambda>0$ sufficiently small.
\end{enumerate}

Firstly, Y.J. Sun and X. Liu \cite{SL} split the Nehari manifold $\mathcal{N}_\lambda$ of Eq.~\eqref{K1} into three disjoint parts $\mathcal{N}_\lambda^+, \mathcal{N}_\lambda^0, \mathcal{N}_\lambda^-$ and verified $\mathcal{N}_\lambda^\pm\neq\emptyset$, $\mathcal{N}_\lambda^0=\{0\}$ for $\lambda>0$ small~enough. By using the Ekeland variational principle and the concentration-compactness principle in $\mathcal{N}_\lambda^+$, they proved that Eq.~\eqref{K1} admits one positive solution with negative energy for $\lambda>0$ small enough and $a,b>0$.
As a conjecture, it is possible to search for a second positive solution in $\mathcal{N}_\lambda^-$ by using the similar way.
However, as they explained, it is hard to obtain the convergence of minimizing sequence in $\mathcal{N}_\lambda^-$,
and the existence of a second positive solution cannot be proved in their way directly.

Later, C.-Y. Lei, G.-S. Liu and  L.-T. Guo \cite{LLG} further studied the existence of two positive solutions for Eq.~\eqref{K1} with $\lambda$, $b$ small enough.
By the constrained minimization argument, they obtained one positive solution $u_1$ (which is a local minimizer) of Eq.~\eqref{K1} for $\lambda>0$ small enough.
Then, having established a mountain pass geometry originating from the first positive solution $u_1$, they obtained a second positive solution for $b,\lambda>0$ small enough.
However, the methods in \cite{LLG} are invalid to obtain a second positive solution for Eq.~\eqref{K1} without the small restriction of $b>0$.

Until now, the question $\mathbf{(Q)}$ remains open.
We have long investigated this problem and carried out related work.
Recently, motivated by the interesting work \cite{CT}, we achieve new progress in our study.
We derived a new threshold value below which the functional satisfies the PS condition,
thus solving this problem completely (see Theorem \ref{T1}).
To our surprise, the solution corresponding to the local minimum identified in Theorem \ref{T1} is the ground state solution---an outcome that is both novel and interesting, even when considering the specific case of $b=0$.

Now we state our main results as follows.

\begin{thm}\label{T1}
Assume that $\Omega\subset \mathbb{R}^3$ is a bounded domain with smooth boundary, $a>0$ and $1<q<2$.
Then there exists some $\Lambda>0$ (independent of $b$) such that Eq.~\eqref{K1} has at least two positive solutions $u_{b,\lambda}^1$ and $u_{b,\lambda}^2$ for any $b>0$ and $\lambda\in(0,\Lambda)$, where $u_{b,\lambda}^1$ is a positive ground state solution with negative energy and $u_{b,\lambda}^2$ is a positive mountain pass solution with positive energy.
\end{thm}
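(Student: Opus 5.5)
Let $H=H_0^1(\Omega)$ with norm $\|u\|^2=\int_\Omega|\nabla u|^2dx$. Positive solutions are critical points of the truncated functional
\[
I_{b,\lambda}(u)=\frac a2\|u\|^2+\frac b4\|u\|^4-\frac16\int_\Omega (u^+)^6dx-\frac\lambda q\int_\Omega (u^+)^qdx .
\]
Positivity of a nontrivial critical point follows from testing with $u^-$ and applying the strong maximum principle. Let $S$ be the best Sobolev constant. Since $b\|u\|^4\ge0$, the geometry is controlled uniformly in $b$ by the case $b=0$. By Sobolev and H\"older,
\[
I_{b,\lambda}(u)\ge \frac a2\|u\|^2-\frac{1}{6S^3}\|u\|^6-C\lambda\|u\|^q .
\]
Hence there are $\rho,\Lambda_0>0$, independent of $b$, such that $\inf_{\|u\|=\rho}I_{b,\lambda}\ge \alpha>0$ for $\lambda<\Lambda_0$.

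\textbf{First solution.} Set $m_{b,\lambda}=\inf_{\|u\|\le\rho}I_{b,\lambda}$. The concave term gives $m_{b,\lambda}<0$, since $I_{b,\lambda}(t\varphi_1)<0$ for small $t$. Take a minimizing sequence, improved by Ekeland's principle into a (PS) sequence in the interior of the ball. It is bounded, so up to a subsequence $\|u_n\|^2\to A$ and $u_n\rightharpoonup u$.

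Passing to the limit in the equation, $u$ solves the problem with the frozen coefficient $a+bA$. By Brezis--Lieb and the concentration-compactness principle, either $u_n\to u$ or a bubble carries energy at least $\frac a3 S^{3/2}$ (in fact more, because of the $b$ term). That amount is incompatible with $\|u_n\|\le\rho$ once $\rho$ is small: energy $\ge\frac a3S^{3/2}$ would need $\|u_n\|\gtrsim (aS)^{3/4}$. So $u_n\to u$ and $u=:u^1_{b,\lambda}$ is a local minimizer with $I_{b,\lambda}(u^1)<0$.

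For the ground-state claim, I would show that every nontrivial critical point $v$ with $I_{b,\lambda}(v)<0$ satisfies $\|v\|<\rho$. On the Nehari identity,
\[
I_{b,\lambda}(v)=\frac a3\|v\|^2+\frac b{12}\|v\|^4-\lambda\Big(\frac1q-\frac16\Big)\int_\Omega |v|^q ,
\]
so negative energy forces $\|v\|^{2-q}\le C\lambda/a$, which is small. Critical points with nonnegative energy do not compete with $m_{b,\lambda}$. Hence $u^1$ attains the least energy among all nontrivial critical points.

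\textbf{Second solution.} Use the mountain pass geometry based at $u^1$. Consider
\[
c_{b,\lambda}=\inf_{\gamma\in\Gamma}\max_{t\in[0,1]} I_{b,\lambda}(\gamma(t)),\qquad \Gamma=\{\gamma:\gamma(0)=u^1,\ I_{b,\lambda}(\gamma(1))<0,\ \|\gamma(1)\|>\rho\}.
\]
Then $c_{b,\lambda}\ge\alpha>0$. Two steps remain, and together they form the key issue: (i) $I_{b,\lambda}$ satisfies $(PS)_c$ for $c<c^*$, and (ii) $c_{b,\lambda}<c^*$, with both valid for all $b>0$.

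\textbf{Compactness threshold.} For (i), redo the concentration-compactness analysis keeping $b$. For a bounded (PS) sequence with $\|u_n\|^2\to A$, $u_n\rightharpoonup u$, and concentration mass $\ell=\lim\|u_n-u\|^2$, the limit equation gives $(a+bA)\ell=\int|u_n-u|^6+o(1)\le S^{-3}\ell^3$. So either $\ell=0$ or
\[
\ell\ge \ell_0:=\frac{bS^3+\sqrt{b^2S^6+4aS^3}}{2},
\]
the positive root of $\ell^2=S^3(a+b\ell)$, using $A\ge\ell$. Computing the energy with Brezis--Lieb, and using that $u$ is a critical point of the frozen problem, gives
\[
c\ge I_{b,\lambda}(u)+\frac a3\ell+\frac b{12}\ell^2+(\text{cross terms }\tfrac b2\|u\|^2\ell\ge0).
\]
The essential point is the ground-state property: $I_{b,\lambda}(u)\ge m_{b,\lambda}$. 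I would verify that the frozen-coefficient critical point still has energy at least $m_{b,\lambda}$, possibly via the fibering map. This yields $(PS)_c$ for
\[
c<c^*:=m_{b,\lambda}+\frac a3\ell_0+\frac b{12}\ell_0^2 .
\]
This is the ``new threshold'' anchored at the ground-state level rather than at $0$.

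\textbf{Level estimate.} For (ii), test along $u^1+tU_\varepsilon$, where $U_\varepsilon$ is the cut-off Talenti bubble. Expanding $I_{b,\lambda}(u^1+tU_\varepsilon)$ and using the equation for $u^1$, the cross terms reduce to
\[
-\int\Big[(u^1+tU_\varepsilon)^6-(u^1)^6-6(u^1)^5tU_\varepsilon-t^6U_\varepsilon^6\Big]+\text{concave corrections}+\text{nonlocal corrections}.
\]
The gain $-C t^5\int U_\varepsilon^5u^1\sim -\varepsilon^{1/2}$ beats the error $O(\varepsilon)$; the concave part is handled by convexity of $s\mapsto s^q$ difference, as in \cite{ABC}. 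The nonlocal term $\frac b4\|u^1+tU_\varepsilon\|^4$ needs care. Its cross terms $b\|u^1\|^2t\langle u^1,U_\varepsilon\rangle$ are $O(\varepsilon^{1/2})$ but with a small coefficient, because $\|u^1\|^2\le C\lambda^{2/(2-q)}$. This is exactly where $\lambda<\Lambda$, independent of $b$, is used: the $b$-dependent pieces are bounded by $b\|u^1\|^2$ times bubble quantities and get absorbed into the $\ell_0$-optimization of $\max_t\big(\frac a2 t^2\|U_\varepsilon\|^2+\frac b4t^4\|U_\varepsilon\|^4-\frac{t^6}6|U_\varepsilon|_6^6\big)=\frac a3\ell_0+\frac b{12}\ell_0^2+O(\varepsilon)$.

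I expect this last uniform-in-$b$ matching of the mountain pass level with $c^*$ to be the main obstacle. Once it is done, the mountain pass theorem produces $u^2_{b,\lambda}$ with $I_{b,\lambda}(u^2)=c_{b,\lambda}>0$, which is distinct from $u^1$ and positive.
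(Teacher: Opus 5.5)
Your geometry, the local minimizer $u^1_{b,\lambda}$, and the ground-state argument are fine. For the last one, the identity $I_{b,\lambda}(v)-\frac16\langle I_{b,\lambda}'(v),v\rangle$ forces $\|v\|<\rho$ for every nontrivial critical point with negative energy, which is more direct than the paper's argument. The gap is in the second solution: your threshold and your test path are incompatible for every $b>0$, so step (ii) cannot be proved.

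\textbf{Why (ii) fails.} First, $\frac a3\ell_0+\frac b{12}\ell_0^2=c_b:=\frac{b^3S^6}{24}+\frac{abS^3}4+\frac1{24}(b^2S^4+4aS)^{3/2}$, so $c^*=m_{b,\lambda}+c_b$. Put $\beta_\varepsilon=\int_\Omega\nabla u^1\cdot\nabla U_\varepsilon\,dx=O(\varepsilon^{1/2})$. When you expand $\frac b4\|u^1+tU_\varepsilon\|^4$ and use the equation of $u^1$, only $t(a+b\|u^1\|^2)\beta_\varepsilon$ cancels. This includes the term $b\|u^1\|^2t\beta_\varepsilon$ you were worried about. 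You are left with
\[
I_{b,\lambda}(u^1+tU_\varepsilon)=m_{b,\lambda}+\frac{t^2}2\big(a+b\|u^1\|^2\big)\|U_\varepsilon\|^2+\frac b4t^4\|U_\varepsilon\|^4-\frac{t^6}6|U_\varepsilon|_6^6+R_\varepsilon(t),\qquad |R_\varepsilon(t)|\le C\varepsilon^{\frac12},
\]
for $t$ in compact sets. The piece $\frac b2t^2\|u^1\|^2\|U_\varepsilon\|^2$ is of order one, not an error term, and
\[
\max_{t\ge0}\Big[\frac{t^2}2\big(a+b\|u^1\|^2\big)S^{\frac32}+\frac b4t^4S^3-\frac{t^6}6S^{\frac32}\Big]=c_b+\delta,\quad
\delta=\frac{b^2S^3}4\|u^1\|^2+\frac1{24}\Big\{\big[b^2S^4+4(a+b\|u^1\|^2)S\big]^{\frac32}-\big(b^2S^4+4aS\big)^{\frac32}\Big\}>0 .
\]
Hence $\max_{t\ge0}I_{b,\lambda}(u^1+tU_\varepsilon)\ge c^*+\delta-C\varepsilon^{1/2}>c^*$ for small $\varepsilon$. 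No $\varepsilon^{1/2}$-gain can beat an $\varepsilon$-independent excess, and small $\lambda$ only makes $\delta$ small, not zero.

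In addition, the cross term that survives inside $R_\varepsilon$ is $bt^3\|U_\varepsilon\|^2\beta_\varepsilon>0$. Its coefficient is about $bt^3S^{3/2}$, which is not small. It is of the same order $\varepsilon^{1/2}$ as your gain $t^5\int_\Omega u^1U_\varepsilon^5$, and there is no reason for it to become subordinate as $\lambda\to0$, since both are driven by the size of $u^1$. This is exactly the obstruction that forced \cite{LLG} to take $b$ small.

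A lesser point concerns (i). The weak limit $u$ only solves the frozen-coefficient equation, so the ground-state property does not apply to it. Split instead into two cases. If $\|u\|\le\rho$, then $I_{b,\lambda}(u)\ge m_{b,\lambda}$ by definition. If $\|u\|>\rho$, testing the limit equation with $u$ shows the energy exceeds $c_b$ outright for small $\lambda$.

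\textbf{The missing idea} is to raise the threshold and move the base point with it.

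In (i), do not discard $b\|u\|^2$. From $\ell^2\ge S^3(a+b\|u\|^2+b\ell)$ the bubble energy depends on $\|u\|$, and the same computation yields $c\ge c_b+\mathcal J_{b,\lambda}(u)$, where
$\mathcal J_{b,\lambda}(u)=I_{b,\lambda}(u)+\frac1{24}\big[b^2S^4+4(a+b\|u\|^2)S\big]^{3/2}+\frac{b^2S^3}4\|u\|^2-\frac1{24}(b^2S^4+4aS)^{3/2}$.
So $(PS)_c$ holds for $c<c_b+\hat m_{b,\lambda}$, with $\hat m_{b,\lambda}=\inf_{B_\rho(0)}\mathcal J_{b,\lambda}\ge m_{b,\lambda}$.

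Then start the path at the local minimizer $w^1$ of $\mathcal J_{b,\lambda}$, not at $u^1$. Note that $w^1$ does \emph{not} solve the equation. Let $g(t)=\frac{t^2}2(a+b\|w^1\|^2)S^{3/2}+\frac b4t^4S^3-\frac{t^6}6S^{3/2}$, maximized at $t_m$, and let $\beta_\varepsilon$ now be computed with $w^1$. Then:
\begin{itemize}
\item exactly $I_{b,\lambda}(w^1)+g(t_m)=c_b+\mathcal J_{b,\lambda}(w^1)=c_b+\hat m_{b,\lambda}$;
\item $\mathcal J_{b,\lambda}'(w^1)=0$ carries the coefficient $a+b\|w^1\|^2+bt_m^2S^{3/2}$, which cancels the full cross term $t(a+b\|w^1\|^2+bt^2\|U_\varepsilon\|^2)\beta_\varepsilon$ at $t=t_m$;
\item the remainder $bt(t^2\|U_\varepsilon\|^2-t_m^2S^{3/2})\beta_\varepsilon$ is $o(\varepsilon^{1/2})$ if $t_\varepsilon\to t_m$, and harmless otherwise by strict maximality of $g$ at $t_m$.
\end{itemize}
So the gain $-t^5\int_\Omega w^1U_\varepsilon^5\sim-\varepsilon^{1/2}$ wins for every $b>0$. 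Without this auxiliary functional, your threshold is too low and your base point is wrong, and the construction of $u^2_{b,\lambda}$ does not close.
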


\begin{xrem}\label{Re1}
This theorem is the counterpart of the multiplicity result in \cite{ABC} with respect to Kirchhoff equation and gives a complete positive answer to the question $\mathbf{(Q)}$. Comparing to \cite{LLG}, we don't need the small restriction of $b>0$ here, some new tricks will be introduced.
An interesting by-product is that the positive ground state solution $u_{b,\lambda}^1$ is a local minimizer of the energy functional for Eq.~\eqref{K1}.
It is worth mentioning that our approaches are still applicable to dealing with the degenerate case (i.e. $a=0$ and $b>0$) of Eq.~\eqref{K1}, of which multiple positive solutions can be obtained by the same strategy. Furthermore, once $a=0$ and $b>0$, the restriction on $q$ can be extended from $1<q<2$ to $1<q<4$.
\end{xrem}

Further, we study the asymptotic behaviour of positive solutions as $b\rightarrow0$ and $\lambda\rightarrow0$, respectively.

\begin{thm}\label{T2}
Assume that $\Omega\subset \mathbb{R}^3$ is a bounded domain with smooth boundary, $a>0$, $\lambda\in(0,\Lambda)$~and $1<q<2$.
Then, for any sequence $\{b_n\}\subset(0,+\infty)$ with $b_n\rightarrow0$, $u_{b_n,\lambda}^1\xrightarrow{n}u_\lambda^1$ and $u_{b_n,\lambda}^2\xrightarrow{n} u_\lambda^2$ in $H_0^1(\Omega)$ in the sense of subsequence, where $u_\lambda^1$ (with negative energy) and $u_\lambda^2$ (with positive energy) are two positive solutions of the Schr$\ddot{\mbox{o}}$dinger equation
\begin{align}\label{SC1}
\begin{cases}
\displaystyle  -a\Delta u=|u|^4u+\lambda|u|^{q-2}u &\mbox{in}\ \Omega,\\
\displaystyle u=0 &\mbox{on}\ \partial\Omega.
\end{cases}
\end{align}
\end{thm}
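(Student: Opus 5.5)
The plan is to obtain uniform bounds in $b$, pass to weak limits, and then use the variational characterizations of $u_{b,\lambda}^1$ and $u_{b,\lambda}^2$ from the proof of Theorem \ref{T1}, with thresholds independent of $b$, to upgrade weak convergence to strong convergence. Write $I_b(u)=\frac a2\|u\|^2+\frac b4\|u\|^4-\frac16|u|_6^6-\frac\lambda q|u|_q^q$, where $\|u\|^2=\int_\Omega|\nabla u|^2dx$. The functional $I_0$ is the one associated with \eqref{SC1}.

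\textbf{Uniform bounds.} Fix $\lambda\in(0,\Lambda)$ and $b_n\to0$; we may assume $b_n\le1$. For $u_n^i:=u_{b_n,\lambda}^i$ the Nehari identity is $a\|u\|^2+b\|u\|^4=|u|_6^6+\lambda|u|_q^q$. Combining it with the energy gives
$$I_b(u)-\frac16\langle I_b'(u),u\rangle=\frac a3\|u\|^2+\frac b{12}\|u\|^4-\lambda\Big(\frac1q-\frac16\Big)|u|_q^q\ \ge\ \frac a3\|u\|^2-C\lambda\|u\|^q .$$
The energies are controlled uniformly in $n$. For $u^1$ the energy is negative. For $u^2$ the mountain pass level is bounded above by the value for $b=1$, since $I_b$ is monotone in $b$ and the path can be taken fixed. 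Since $q<2$, this yields $\sup_n\|u_n^i\|<\infty$. Passing to a subsequence, $u_n^i\rightharpoonup u^i$ in $H_0^1(\Omega)$, strongly in $L^q$ and $L^2$, and a.e. We also have $b_n\|u_n^i\|^2\to0$, so the coefficient $a+b_n\|u_n^i\|^2$ tends to $a$. Testing the equation with $\varphi\in H_0^1(\Omega)$ then shows that $u^i$ is a nonnegative weak solution of \eqref{SC1}.

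\textbf{Strong convergence.} Set $w_n=u_n^i-u^i$. By the Brezis--Lieb lemma, $\|u_n\|^2=\|u\|^2+\|w_n\|^2+o(1)$ and $|u_n|_6^6=|u|_6^6+|w_n|_6^6+o(1)$. Subtracting the Nehari identities for $u_n$ and $u$ gives $a\|w_n\|^2=|w_n|_6^6+o(1)$. Suppose $\|w_n\|^2\to l>0$. Then $l\ge a^{3/2}S^{3/2}$, and hence
$$\liminf_n I_{b_n}(u_n)\ \ge\ I_0(u)+\frac a3 l\ \ge\ I_0(u)+\frac13a^{3/2}S^{3/2}.$$
We then compare this with the thresholds found in Theorem \ref{T1}.

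For $u^1$: $I_{b_n}(u_n^1)<0$ and $I_0(u)\ge\inf I_0|_{\mathcal N^+}\ge -C\lambda^{2/(2-q)}$, which is a contradiction for $\lambda<\Lambda$. Hence $u_n^1\to u^1$ strongly. Moreover $I_0(u^1)=\lim I_{b_n}(u_n^1)\le\limsup_n m_{b_n}<0$, where $m_b$ denotes the ground state level, which is nonincreasing as $b\downarrow0$. So $u^1\ne0$, and $u^1$ has negative energy.

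For $u^2$: the level $c_b$ constructed in Theorem \ref{T1} satisfies the $b$-independent bound $c_b< m_b+\frac13a^{3/2}S^{3/2}$, or whatever the precise new threshold is. I would use this bound together with $I_0(u)\ge m_0$ when $u\ne0$, or $I_0(u)=0$ when $u=0$, and with $m_{b_n}\to m_0$, which follows from the first part. This should rule out $l>0$. This is the \textbf{main obstacle}. One must verify that the threshold inequality of Theorem \ref{T1} survives the limit, i.e.\ $\limsup c_{b_n}$ stays strictly below $m_0+\frac13a^{3/2}S^{3/2}$ uniformly. One must also handle the case $u^2=0$, where the threshold must dominate $\frac13 a^{3/2}S^{3/2}$.

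I would first try to prove that $c_b\to c_0$ by monotonicity: $c_b$ is nondecreasing in $b$, and the limit can be controlled by continuity along a fixed path built from $u^1_\lambda$ and concentrating bubbles. The strict inequality for $c_0$ is the Ambrosetti--Brezis--Cerami estimate.

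\textbf{Identification of the limits.} Once strong convergence holds, we get $I_0(u^2)=\lim c_{b_n}\ge\liminf c_{b_n}>0$, since the mountain pass levels are bounded below uniformly by the $b=0$ geometry $\rho$-sphere estimate. Hence $u^2\ne0$ and $u^2\neq u^1$, because the signs of their energies differ. Finally, $u^i\ge0$ and $u^i\ne0$, so the strong maximum principle gives $u^i>0$. Thus $u_\lambda^1:=u^1$ and $u_\lambda^2:=u^2$ are the two positive solutions of \eqref{SC1} claimed in Theorem \ref{T2}.
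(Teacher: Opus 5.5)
\textbf{There is a genuine gap: the decisive step for $u_{b_n,\lambda}^2$ is left open.} Your skeleton (uniform bounds, weak limits, the splitting $a\kappa^2=\nu^6$, comparison of energies) is the same as the paper's. But you never prove $\limsup_n c_{b_n,\lambda}<m_{0,\lambda}+\frac13(aS)^{\frac32}$, where $c_{b,\lambda}$ is the minimax level of Theorem~\ref{T2.6} and $m_{0,\lambda}=\inf_{B_\rho(0)}\mathcal{I}_{0,\lambda}$. Letting $n\to\infty$ in $c_{b_n,\lambda}<c_{b_n}+\tilde m_{b_n,\lambda}$ only gives ``$\le$'': $c_{b_n}\to\frac13(aS)^{\frac32}$ by \eqref{R2.4.0}, and $\tilde m_{b_n,\lambda}=\hat m_{b_n,\lambda}\to m_{0,\lambda}$ because $\mathcal{J}_{b,\lambda}\to\mathcal{I}_{0,\lambda}$ uniformly on $B_\rho(0)$.

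The paper tries to restore strictness via \eqref{P1.3.6}--\eqref{P1.3.2}, but this does not work. \eqref{P1.3.6} bounds the number $\hat m_{b_n,\lambda}$ by $\lim_{t\to0^+}\mathcal{I}_{b_n,\lambda}(tu_\lambda)/t^q$, which is unjustified; the same reasoning with $su_\lambda$, $s\to\infty$, would give $\hat m_{b,\lambda}=-\infty$. Its conclusion is also false. Since $\mathcal{J}_{b,\lambda}\ge\mathcal{I}_{0,\lambda}$, we have $\hat m_{b,\lambda}\ge m_{0,\lambda}$. Moreover $m_{0,\lambda}$ is attained at a critical point $w_0$ (Theorem~\ref{T2.2} with $b=0$), and $\mathcal{I}_{0,\lambda}(w_0)=\frac a3\|w_0\|^2-\frac{\lambda(6-q)}{6q}|w_0|_q^q$ is at least the right-hand side of \eqref{P1.3.6}. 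So you are right that this is the main obstacle, and your route is the viable one.

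However, the two mechanisms you offer do not work as stated. The minimax level need not be monotone in $b$, since $\Gamma_{b,\lambda}$ depends on $b$ through $\gamma(0)=w_{b,\lambda}^1$ and through the endpoint condition $\bar{\mathcal{I}}_{b,\lambda}(\gamma(1))<0$. Also, a fixed path starting at $u_\lambda^1$ is not admissible in $\Gamma_{b_n,\lambda}$.

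\textbf{How to close the gap.}

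(i) Note that $\|w_{b_n,\lambda}^1\|<\rho$ and $\rho^2=\big(\frac{3(2-q)}{6-q}\big)^{\frac12}a^{\frac12}S^{\frac32}<a^{\frac12}S^{\frac32}$. Your splitting (the coefficient in front of $-\Delta w_{b_n,\lambda}^1$ tends to $a$) therefore excludes the alternative $\kappa^2\ge a^{\frac12}S^{\frac32}$. Hence, along a subsequence, $w_{b_n,\lambda}^1\to w_0$ in $H_0^1(\Omega)$, with $w_0>0$ a minimizer of $\mathcal{I}_{0,\lambda}$ on $B_\rho(0)$. The same remark gives $u_{b_n,\lambda}^1\to u_\lambda^1$ without checking constants. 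Note also that the dichotomy is $l\ge a^{\frac12}S^{\frac32}$, not $a^{\frac32}S^{\frac32}$.

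(ii) Lemma~\ref{L2.5} at $b=0$, i.e.\ the Ambrosetti--Br\'ezis--Cerami estimate, gives $\varepsilon$ with $\max_{t\ge0}\bar{\mathcal{I}}_{0,\lambda}(w_0+tu_\varepsilon)<m_{0,\lambda}+\frac13(aS)^{\frac32}$. For $T$ large, the path $s\mapsto w_{b_n,\lambda}^1+sTu_\varepsilon$ lies in $\Gamma_{b_n,\lambda}$ for all $n$ (taking $b_n\le1$). Local uniform convergence $\bar{\mathcal{I}}_{b_n,\lambda}\to\bar{\mathcal{I}}_{0,\lambda}$ then gives $\limsup_n c_{b_n,\lambda}\le\max_{t\in[0,T]}\bar{\mathcal{I}}_{0,\lambda}(w_0+tu_\varepsilon)$.

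(iii) The weak limit $u$ of $u_{b_n,\lambda}^2$ is a nonnegative critical point of $\mathcal{I}_{0,\lambda}$, and $\mathcal{I}_{0,\lambda}(u)\ge m_{0,\lambda}$. Indeed, if $\mathcal{I}_{0,\lambda}(u)<0$, the $\frac14$-identity from the proof of Theorem~\ref{T1} forces $\|u\|<\rho$ when $\lambda<\Lambda$.

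Along the subsequence, $\liminf_n c_{b_n,\lambda}\ge\mathcal{I}_{0,\lambda}(u)+\frac a3 l$. Combined with (ii) and (iii), this rules out $l>0$, and your identification of the limits then goes through.
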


\begin{xrem}\label{Re4}
Obviously, this theorem indicates that the multiplicity of positive solutions to Eq.~\eqref{K1} cannot vanish as the parameter $b\rightarrow0^+$.
Moreover, as a by-product, we obtain two positive solutions of Eq.~\eqref{SC1} for $\lambda>0$ sufficiently small, which consist with the multiplicity result in \cite{ABC} to some extent.
\end{xrem}

\begin{thm}\label{T3}
Assume $\Omega\subset \mathbb{R}^3$ is a bounded domain with smooth boundary, $a, b>0$ and $1<q<2$.
Then, for any sequence $\{\lambda_n\}\subset(0,\Lambda)$ with $\lambda_n\rightarrow0$, the solution sequence $u_{b,\lambda_n}^1\xrightarrow{n}0$ in $H_0^1(\Omega)$, while $|u_{b,\lambda_n}^2|_\infty\xrightarrow{n}+\infty$ if $\Omega$ is strictly star-shaped.
\end{thm}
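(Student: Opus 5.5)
We treat the two families separately, using the variational characterisations from the proof of Theorem \ref{T1}: $u_{b,\lambda}^1$ is a ground state, and hence a local minimiser with negative energy, while $u_{b,\lambda}^2$ is a mountain pass solution whose level lies below the compactness threshold. Write
\[
I_{b,\lambda}(u)=\frac a2\|u\|^2+\frac b4\|u\|^4-\frac16|u|_6^6-\frac\lambda q|u|_q^q,\qquad \|u\|=|\nabla u|_2 .
\]

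\emph{First family.} From $I_{b,\lambda_n}(u^1_n)<0$ and $\langle I'_{b,\lambda_n}(u^1_n),u^1_n\rangle=0$ we get
\[
I_{b,\lambda_n}(u^1_n)=\frac a3\|u^1_n\|^2+\frac b{12}\|u^1_n\|^4-\lambda_n\Big(\frac1q-\frac16\Big)|u^1_n|_q^q<0 .
\]
Hence $\frac a3\|u^1_n\|^2\le C\lambda_n\|u^1_n\|^q$ by Hölder and Sobolev. This gives $\|u^1_n\|^{2-q}\le C\lambda_n\to0$, so $u^1_{b,\lambda_n}\to0$ in $H_0^1(\Omega)$. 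This part is routine.

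\emph{Second family.} We argue by contradiction and suppose that, along a subsequence, $|u^2_n|_\infty\le M$.

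Step 1 is a uniform $H_0^1$ bound. The mountain pass levels $c_n$ are bounded above independently of $n$, since they lie below the threshold, which stays bounded as $\lambda\to0$. They are also bounded below by a positive constant, because for $\lambda$ small the geometry $I_{b,\lambda}\ge\rho>0$ on a fixed sphere is uniform. Combining $I-\frac16\langle I',u\rangle$ as above gives $\frac a3\|u^2_n\|^2\le c_n+C\lambda_n\|u^2_n\|^q$, so $\{u^2_n\}$ is bounded.

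Step 2 is nontriviality of the limit. Suppose $u^2_n\to0$ in $H_0^1$. Then $c_n=I_{b,\lambda_n}(u^2_n)\to0$, contradicting $c_n\ge\rho>0$. So $u^2_n\not\to0$. Since $|u^2_n|_\infty\le M$, the right-hand sides $(u^2_n)^5+\lambda_n(u^2_n)^{q-1}$ are uniformly bounded in $L^\infty$. Elliptic regularity applies to $-(a+b\|u^2_n\|^2)\Delta u=\dots$, whose coefficient is bounded above and below. It yields a uniform $C^{1,\alpha}(\overline\Omega)$ bound, so a subsequence converges in $C^1$ and in $H_0^1$ to some $u\ge0$. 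The limit satisfies
\[
-(a+b\|u\|^2)\Delta u=u^5,\qquad u\not\equiv0 ,
\]
where $u\not\equiv0$ follows from strong convergence together with $\|u^2_n\|\ge\delta$. The Nehari identity gives the lower bound $\|u^2_n\|\ge\delta$: $a\|u\|^2\le|u|_6^6+\lambda|u|_q^q$ with $\|u\|$ small would force a contradiction. This lower bound needs care because of the concave term, so I would use instead the lower bound on $c_n$ together with strong convergence, which gives $I_{b,0}(u)=\lim c_n>0$.

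Step 3 is the Pohozaev contradiction. Set $\mu=a+b\|u\|^2$. The rescaling $w=\mu^{-1/4}u$ solves $-\Delta w=w^5$ in $\Omega$, $w=0$ on $\partial\Omega$, $w\ge0$, $w\not\equiv0$. The strong maximum principle gives $w>0$. This contradicts Pohozaev's nonexistence theorem \cite{P} on strictly star-shaped domains. Hence $|u^2_{b,\lambda_n}|_\infty\to+\infty$.

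The main obstacle is Step 1 together with the positivity of $\lim c_n$. It requires checking, from the construction in Theorem \ref{T1}, that the mountain pass geometry radius and the energy gap are uniform for $\lambda\in(0,\Lambda)$. If the mountain pass in that construction originates from $u^1$, the lower bound should instead be taken as $c_n\ge I(u^1_n)+\rho$ with $I(u^1_n)\to0$.
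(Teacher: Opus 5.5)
Your proposal is correct and follows essentially the same route as the paper. The first family is handled by an energy--Nehari combination giving $\|u^1_{b,\lambda_n}\|^{2-q}\le C\lambda_n$. For the second family you argue by contradiction: a uniform $L^\infty$ bound gives, through elliptic estimates and compactness, a nontrivial limit solving the $\lambda=0$ Kirchhoff problem, which is impossible on a strictly star-shaped domain given the uniform bound $c_{b,\lambda_n}\ge\alpha$.

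Your hedge about where the mountain pass starts is unnecessary. Every admissible path starts at $w^1_{b,\lambda}\in B_\rho(0)$ and ends outside $B_\rho(0)$, so it crosses $S_\rho$, where the bound $\alpha$ is independent of $b$ and $\lambda$.

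The only difference from the paper is minor. You reduce the limit equation to $-\Delta w=w^5$ by the scaling $w=(a+b\|u\|^2)^{-1/4}u$ and apply Poho\v{z}aev's classical theorem directly, whereas the paper cites Naimen's Kirchhoff version of the nonexistence result.
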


\begin{xrem}\label{Re3}
The assumption that $\Omega$ is strictly star-shaped is imposed to conclude the nonexistence result of the limiting problem (i.e.~Eq.~\eqref{K1} with $\lambda=0$),
by which we analyse the asymptotic behavior of the mountain pass solution sequence~$\{u_{b,\lambda_n}^2\}$.
Analogously, based on the existence results on Eq.~\eqref{SC1} obtained in Theorem \ref{T2}, let $\Omega$ be strictly star-shaped if necessary, the vanishing property of $u_\lambda^1$ and the blow-up property of $u_\lambda^2$ as $\lambda\rightarrow0^+$ can be proved by using the same methods as the proof of Theorem~\ref{T3}.
\end{xrem}

The rest of this paper are organized as follows: Theorem \ref{T1} will be proved in Sect.~\ref{Se1}, Theorems \ref{T2} and \ref{T3} will be proved in Sect.~\ref{Se2}.
Throughout this paper, we will frequently use the notations:
\begin{enumerate}
  \setlength{\itemsep}{-0.01pt}
  \item [$\spadesuit$] $C_0^\infty(\Omega)$ consists of infinitely differentiable functions with compact support in $\Omega$.
  \item [$\spadesuit$] $L^p(\Omega)$ denotes the Lebesgue space with the usual norm $|u|_p=\left(\int_\Omega|u|^pdx\right)^{\frac1p}$ for $p\in[1,+\infty)$.
  \item [$\spadesuit$] $L^\infty(\Omega)$ is the space of measurable functions with the norm $|u|_\infty=\mbox{ess~sup}_{x\in\Omega}|u(x)|$.
  \item [$\spadesuit$] $H_0^1(\Omega)$ is the usual Sobolev space endowed with the standard norm $\|u\|=\left(\int_\Omega|\nabla u|^2dx\right)^{\frac12}$.
  \item [$\spadesuit$] The best Sobolev constant $S=\inf\left\{\|u\|^2:u\in H_0^1(\Omega)\ \mbox{and}\ |u|_6=1\right\}$.
  \item [$\spadesuit$] $\mathbb{B}_r(y):=\left\{x\in\Omega:|x-y|<r\right\}$ and $\overline{\mathbb{B}}_r(y):=\left\{x\in\Omega:|x-y|\leq r\right\}$.
  \item [$\spadesuit$] $|\Omega|$ denotes the Lebesgue measure of $\Omega$ and $u^\pm=\max\left\{\pm u,0\right\}$.
  \item [$\spadesuit$] $o(1)$ denotes a quantity tending to $0$ as $n\rightarrow\infty$.
\end{enumerate}

\section{Proof of Theorem \ref{T1}}\label{Se1}

From the perspective of variational methods, solutions of Eq.~\eqref{K1} correspond to critical points of the energy functional
\begin{align*}
\mathcal{I}_{b,\lambda}(u)=\frac a2\|u\|^2+\frac b4\|u\|^4-\frac16|u|_6^6-\frac\lambda q|u|_q^q,\ \ \ \ \forall\ u\in H_0^1(\Omega).
\end{align*}
It is standard to verify $\mathcal{I}_{b,\lambda}\in C^1(H_0^1(\Omega),\mathbb{R})$.
To estimate energy, we introduce the auxiliary functional
\begin{align*}
\mathcal{J}_{b,\lambda}(u)
=\mathcal{I}_{b,\lambda}(u)+\frac1{24}\left[b^2S^4+4\left(a+b\|u\|^2\right)S\right]^{\frac32}
+\frac{b^2S^3}4\|u\|^2-\frac{\left(b^2S^4+4aS\right)^{\frac32}}{24}.
\end{align*}
Similarly, it is easy to verify that $\mathcal{J}_{b,\lambda}\in C^1(H_0^1(\Omega),\mathbb{R})$ and for any $u,v\in H_0^1(\Omega)$,
\begin{align*}
\left\langle\mathcal{J}_{b,\lambda}'(u),v\right\rangle
&=\left[a+\frac{b^2S^3}2+b\|u\|^2+\frac{bS}2\sqrt{b^2S^4+4(a+b\|u\|^2)S}\right]\int_\Omega\nabla u\cdot\nabla v dx \\
&\quad-\int_\Omega|u|^4uvdx-\lambda \int_\Omega |u|^{q-2}uvdx.
\end{align*}

\subsection{Existence of positive solution with negative energy}

In this section, we show that Eq.~\eqref{K1} has a positive solution $u_{b,\lambda}^0$, which is local  minimizer of $\mathcal{I}_{b,\lambda}$,
and $\mathcal{J}_{b,\lambda}$ has a local positive minimizer $w_{b,\lambda}^1$ for small $\lambda$, where $\mathcal{I}_{b,\lambda}(u_{b,\lambda}^0)<0$ and $\mathcal{J}_{b,\lambda}(w_{b,\lambda}^1)<0$.

\begin{lem}\label{L2.1}
There exist some $\lambda_1>0$ and $\rho,\alpha>0$ (independent of $b$) such that, for any $b>0$ and $\lambda\in(0,\lambda_1)$, there hold
\begin{align}
&\inf\limits_{u\in S_\rho}\mathcal{I}_{b,\lambda}(u)\geq\alpha>0\ \ \mbox{and}\ \ m_{b,\lambda}:=\inf\limits_{u\in B_\rho(0)}\mathcal{I}_{b,\lambda}(u)<0, \label{R2.1.1}\\
&\inf\limits_{u\in S_\rho}\mathcal{J}_{b,\lambda}(u)\geq\alpha>0\ \ \mbox{and}\ \ \hat{m}_{b,\lambda}:=\inf\limits_{u\in B_\rho(0)}\mathcal{J}_{b,\lambda}(u)<0, \label{R2.1.2}
\end{align}
where $S_\rho=\left\{u\in H_0^1(\Omega):\|u\|=\rho\right\}$ and $B_\rho(0)=\left\{u\in H_0^1(\Omega):\|u\|\leq\rho\right\}$.
\end{lem}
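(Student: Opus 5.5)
The plan is to drop every term that helps us (in a way that is uniform in $b$) and compare with the functional of the Schr\"odinger problem. Since $b>0$, we have $\frac b4\|u\|^4\ge 0$. For $\mathcal J_{b,\lambda}$, the map $t\mapsto \frac1{24}[b^2S^4+4(a+bt)S]^{3/2}$ is nondecreasing in $t\ge0$, so
\begin{align*}
\frac1{24}\left[b^2S^4+4\left(a+b\|u\|^2\right)S\right]^{\frac32}-\frac{(b^2S^4+4aS)^{\frac32}}{24}\ge0 .
\end{align*}
We also have $\frac{b^2S^3}{4}\|u\|^2\ge0$. Hence, for every $u\in H_0^1(\Omega)$ and every $b>0$,
\begin{align*}
\mathcal J_{b,\lambda}(u)\ge \mathcal I_{b,\lambda}(u)\ge \frac a2\|u\|^2-\frac16|u|_6^6-\frac\lambda q|u|_q^q .
\end{align*}
By the Sobolev inequality $|u|_6^6\le S^{-3}\|u\|^6$, and by H\"older plus Sobolev $|u|_q^q\le |\Omega|^{\frac{6-q}{6}}S^{-\frac q2}\|u\|^q$. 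Therefore, on $S_\rho$,
\begin{align*}
\mathcal J_{b,\lambda}(u)\ge\mathcal I_{b,\lambda}(u)\ge \rho^q\Big(\frac a2\rho^{2-q}-\frac{1}{6S^3}\rho^{6-q}-\frac\lambda q|\Omega|^{\frac{6-q}6}S^{-\frac q2}\Big).
\end{align*}
The bracket no longer involves $b$.

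Next we fix the constants. Choose $\rho>0$ (depending only on $a$ and $S$) with $\frac{1}{6S^3}\rho^{4}\le\frac a4$, so that $\frac a2\rho^{2-q}-\frac1{6S^3}\rho^{6-q}\ge \frac a4\rho^{2-q}$. Then take $\lambda_1$ with $\frac{\lambda_1}{q}|\Omega|^{\frac{6-q}6}S^{-\frac q2}=\frac a8\rho^{2-q}$. For $\lambda\in(0,\lambda_1)$ this gives both lower bounds with $\alpha=\frac a8\rho^2$. All three quantities $\rho,\alpha,\lambda_1$ are independent of $b$. This proves the first halves of \eqref{R2.1.1} and \eqref{R2.1.2}.

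It remains to show that the infima over $B_\rho(0)$ are negative. Fix any $\varphi\in H_0^1(\Omega)\setminus\{0\}$ and let $t>0$ be small, so that $t\varphi\in B_\rho(0)$. Then
\begin{align*}
\mathcal I_{b,\lambda}(t\varphi)\le \frac a2t^2\|\varphi\|^2+\frac b4t^4\|\varphi\|^4-\frac\lambda q t^q|\varphi|_q^q<0
\end{align*}
for $t$ small, because $q<2$ and the $t^q$ term dominates. For $\mathcal J_{b,\lambda}$ the extra terms are $C^1$ in $s=\|u\|^2$ and vanish at $s=0$, so they are $O(t^2)$ with a constant depending on $b$. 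The $t^q$ term still dominates, so $\mathcal J_{b,\lambda}(t\varphi)<0$ for small $t$ (depending on $b$ and $\lambda$). The smallness of $t$ is allowed to depend on $b$, since only the sign of $m_{b,\lambda}$ and $\hat m_{b,\lambda}$ is claimed.

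The only point needing care, and the main obstacle, is the monotonicity argument that makes the extra terms in $\mathcal J_{b,\lambda}$ nonnegative. This is what keeps the uniformity in $b$; everything else is routine.
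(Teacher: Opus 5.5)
Your proof is correct and follows essentially the paper's route. You drop the nonnegative $b$-dependent terms (including the monotone $[\cdot]^{3/2}$ difference in $\mathcal J_{b,\lambda}$), reduce via Sobolev and H\"older to a one-variable bound in $\|u\|$, and obtain negativity from the dominant $t^q$ term along a ray; the only cosmetic difference is that the paper takes $\rho$ to be the exact maximizer of $f(t)=\frac a2t^{2-q}-\frac1{6S^3}t^{6-q}$, whereas you take any $\rho$ with $\rho^4\le\frac32aS^3$, which gives explicit $\lambda_1$ and $\alpha=\frac a8\rho^2$.
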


\begin{prf}
On the one hand, by the H$\ddot{\mbox{o}}$lder and Sobolev inequalities we have, for any $b>0$ and $\lambda>0$,
\begin{align*}
\mathcal{J}_{b,\lambda}(u)\geq
\mathcal{I}_{b,\lambda}(u)
&\geq\frac a2\|u\|^2+\frac b4\|u\|^4-\frac1{6S^3}\|u\|^6-\frac{\lambda|\Omega|^{\frac{6-q}6}}{qS^{\frac q2}}\|u\|^q\\
&\geq\|u\|^q\left(\frac a2\|u\|^{2-q}-\frac1{6S^3}\|u\|^{6-q}-\frac{\lambda|\Omega|^{\frac{6-q}6}}{qS^{\frac q2}}\right),\ \ \ \forall\ u\in H^1_0(\Omega).
\end{align*}
Set
\begin{align*}
f(t)=\frac a2t^{2-q}-\frac1{6S^3}t^{6-q},\ \ \ \forall\ t\geq 0,
\end{align*}
it is easy to show that $\rho=\left[\frac{3a(2-q)S^3}{6-q}\right]^{\frac14}$ is the unique maximum point of $f$ on $[0,+\infty)$ and the~maximum
\begin{align*}
f(\rho)=\frac {2a}{6-q}\left[\frac{3a(2-q)S^3}{6-q}\right]^{\frac{2-q}4}.
\end{align*}
Then, arbitrarily extracting $\lambda_1\in\left(0,f(\rho)qS^{\frac q2}|\Omega|^{\frac{q-6}6}\right)$, we deduce that, for any $b>0$ and $\lambda\in(0,\lambda_1)$,
\begin{align*}
\inf\limits_{u\in S_\rho}\mathcal{J}_{b,\lambda}(u)\geq
\inf\limits_{u\in S_\rho}\mathcal{I}_{b,\lambda}(u)\geq
\rho^q\left(f(\rho)-\frac{\lambda_1|\Omega|^{\frac{6-q}6}}{qS^{\frac q2}}\right):=\alpha>0.
\end{align*}
On the other hand, since $(1+s)^{\frac32}\leq 1+2s$ for small $s\geq0$,
we deduce, for any given $u\in H_0^1(\Omega)\backslash\{0\}$,
\begin{align*}
\lim\limits_{t\rightarrow0^+}\frac{\mathcal{I}_{b,\lambda}(tu)}{t^q}
&\leq\lim\limits_{t\rightarrow0^+}\frac{\mathcal{J}_{b,\lambda}(tu)}{t^q} \notag\\
&\leq \lim\limits_{t\rightarrow0^+}
\left[\left(\frac a2+\frac{b^2S^3}4+\frac{bS}3\sqrt{b^2S^4+4aS}\right)t^{2-q}\|u\|^2+\frac {bt^{4-q}}4\|u\|^4-\frac{t^{6-q}}6|u|_6^6-\frac{\lambda} q|u|_q^q\right] \notag\\
&=-\frac\lambda q|u|_q^q<0,
\end{align*}
which implies $m_{b,\lambda}, \hat{m}_{b,\lambda} \in(-\infty,0)$ for any $b,  \lambda>0$. Thus the proof of this lemma is completed.
\end{prf}

\begin{thm}\label{T2.2}
For any $b>0$ and $\lambda\in(0,\lambda_1)$, Eq.~$\mathrm{(\ref{K1})}$ has a positive solution $u_{b,\lambda}^1$ with the energy $\mathcal{I}_{b,\lambda}(u_{b,\lambda}^1)=m_{b,\lambda}$.
\end{thm}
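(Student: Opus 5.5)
The plan is to show that the local infimum $m_{b,\lambda}$ from Lemma \ref{L2.1} is attained at an interior point of $B_\rho(0)$, and then to upgrade that minimizer to a positive solution.

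\textbf{Step 1: a minimizing sequence and its weak limit.} Take a minimizing sequence $\{u_n\}\subset B_\rho(0)$ with $\mathcal{I}_{b,\lambda}(u_n)\to m_{b,\lambda}<0$. Since $\mathcal{I}_{b,\lambda}(|u|)=\mathcal{I}_{b,\lambda}(u)$ and $\||u|\|=\|u\|$, we may replace $u_n$ by $|u_n|$ and so assume $u_n\geq0$. Because $\inf_{S_\rho}\mathcal{I}_{b,\lambda}\geq\alpha>0>m_{b,\lambda}$, we have $\|u_n\|<\rho-\delta$ for large $n$ and some $\delta>0$; otherwise continuity would push values near $S_\rho$ above $m_{b,\lambda}$. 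Ekeland's variational principle on the closed ball then lets us also assume $\mathcal{I}_{b,\lambda}'(u_n)\to0$. Up to a subsequence, $u_n\rightharpoonup u$ in $H_0^1(\Omega)$, $u_n\to u$ in $L^q(\Omega)$, $\|u_n\|^2\to A^2\geq\|u\|^2$, and $u\geq0$.

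\textbf{Step 2 (main obstacle): ruling out loss of compactness in the critical term.} Set $w_n=u_n-u$. The Brezis--Lieb lemma gives
$$\|u_n\|^2=\|u\|^2+\|w_n\|^2+o(1),\qquad |u_n|_6^6=|u|_6^6+|w_n|_6^6+o(1).$$
We then argue by cases.
\begin{itemize}
\item If $\|w_n\|\to0$, the convergence is strong, and $u$ attains $m_{b,\lambda}$.
\item If $\|w_n\|^2\to\ell>0$, we aim for a contradiction from the smallness of the ball. Testing $\mathcal{I}_{b,\lambda}'(u_n)\to0$ against $u_n$, and using the weak limit equation $(a+bA^2)(-\Delta u)=u^5+\lambda u^{q-1}$ tested against $u$, we obtain $(a+bA^2)\ell=\lim|w_n|_6^6\le S^{-3}\ell^3$. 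Hence $\ell^2\geq aS^3$, so $\ell\geq\sqrt{a}S^{3/2}$. On the other hand $\ell\leq\rho^2=[3a(2-q)S^3/(6-q)]^{1/2}<\sqrt{a}S^{3/2}$, since $3(2-q)<6-q$ for $q>1$. This is a contradiction.
\end{itemize}
So the choice of $\rho$ in Lemma \ref{L2.1} is exactly what kills bubbling, and the argument is independent of $b$. This is the step that needs the most care, since the coefficient $a+bA^2$ in the limit equation involves $A$ rather than $\|u\|$. The estimate above, however, only uses $a+bA^2\geq a$.

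\textbf{Step 3: the minimizer is a positive solution.} Once convergence is strong, $u\in B_\rho(0)$ with $\mathcal{I}_{b,\lambda}(u)=m_{b,\lambda}<0$, so $u\neq0$ and $\|u\|<\rho$. Thus $u$ is an interior local minimizer and hence a critical point of $\mathcal{I}_{b,\lambda}$, i.e.\ a weak solution of \eqref{K1}, with $u\geq0$. Writing the equation as $-\Delta u=(a+b\|u\|^2)^{-1}(u^5+\lambda u^{q-1})\geq0$, the Brezis--Kato bootstrap gives regularity. The strong maximum principle then yields $u>0$ in $\Omega$, and we set $u_{b,\lambda}^1:=u$.
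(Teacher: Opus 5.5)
Your proof is correct, but it takes a different route from the paper's.

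The paper never builds a Palais--Smale sequence. It takes a nonnegative minimizing sequence in $B_\rho(0)$ and notes that $\bar u_n=u_n-u$ stays in $B_\rho(0)$ for large $n$. It then applies the Br\'ezis--Lieb splitting to the \emph{energy}, $\mathcal{I}_{b,\lambda}(u_n)=\mathcal{I}_{b,\lambda}(u)+\frac a2\|\bar u_n\|^2+\frac b4\|\bar u_n\|^4+\frac b2\|\bar u_n\|^2\|u\|^2-\frac16|\bar u_n|_6^6+o(1)$. Combined with the fact, a by-product of Lemma \ref{L2.1}, that $\frac a2\|v\|^2+\frac b4\|v\|^4-\frac16|v|_6^6\ge0$ on $B_\rho(0)$, this gives $\mathcal{I}_{b,\lambda}(u)\le m_{b,\lambda}$ directly. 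Strong convergence is never needed, and the Euler--Lagrange equation is read off from one-sided difference quotients at the interior minimizer.

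You instead use Ekeland to turn the minimizing sequence into a (PS) sequence. You then exclude bubbling with the Nehari-type identity $(a+bA^2)\ell=\lim|w_n|_6^6\le S^{-3}\ell^3$, which forces $\ell\ge\sqrt a\,S^{3/2}>\rho^2$.

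Both arguments rest on $\rho$ being small relative to $(aS^3)^{1/4}$. The paper needs $\rho^4\le 3aS^3$; you need the stronger $\rho^4<aS^3$. Both hold because $\rho^4=\frac{3(2-q)}{6-q}aS^3$. The paper's route is more elementary, with no Ekeland and no derivative information along the sequence. Yours additionally shows that the minimizing (PS) sequence converges strongly, and it is the same dichotomy the paper itself uses later in Lemmas \ref{L2.3+} and \ref{L2.4}.

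Two small points to tidy up:
\begin{itemize}
\item The margin near $S_\rho$ should be justified by the explicit bound $\mathcal{I}_{b,\lambda}(u)\ge\|u\|^q\bigl(f(\|u\|)-C\lambda\bigr)$ from Lemma \ref{L2.1}, not by bare continuity. In fact you only need $\mathcal{I}_{b,\lambda}(v_n)<0<\alpha$ to place Ekeland's points in the open ball.
\item Ekeland's points need not be nonnegative. However, they lie within $o(1)$ in $H_0^1(\Omega)$ of $|u_n|$, so their weak limit is still $u\ge0$; alternatively, replace the final minimizer by $|u|$.
\end{itemize}
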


\begin{prf}
As a by-product of proving Lemma \ref{L2.1}, there holds that
\begin{align}\label{R2.21}
\frac a2\|u\|^2+\frac b4\|u\|^4-\frac16|u|_6^6\geq0,\ \ \ \ \forall\ u\in B_\rho(0).
\end{align}
Since $\mathcal{I}_{b,\lambda}$ is even, by the definition of $m_{b,\lambda}$, there exists nonnegative minimizing sequence $\{u_n\}\subset B_\rho(0)$ such that $\mathcal{I}_{b,\lambda}(u_n)\xrightarrow{n}m_{b,\lambda}$. Clearly, $\{u_n\}$ is bounded in $H_0^1(\Omega)$.
Then there exists some nonnegative $u_{b,\lambda}^1$ such that, up to a subsequence,
\begin{align}\label{R2.22}
u_n\rightharpoonup u_{b,\lambda}^1\ \mbox{in}\ H_0^1(\Omega);\ \ \ \
u_n\rightarrow u_{b,\lambda}^1\ \mbox{in}\ L^p(\Omega),\ 1\leq p<6;\ \ \ \
u_n(x)\rightarrow u_{b,\lambda}^1(x)\ \mbox{a.e.}\ \mbox{in}\ \Omega.
\end{align}
Set $\bar{u}_n=u_n-u_{b,\lambda}^1$. By \eqref{R2.22}, we have
\begin{align}\label{R2.23}
\|u_n\|^2=\|\bar{u}_n\|^2+\|u_{b,\lambda}^1\|^2+o(1).
\end{align}
We claim $\bar{u}_n\in B_\rho(0)$ for $n$ large. Indeed, if $u_{b,\lambda}^1=0$, it is easy to see $\bar{u}_n=u_n\in B_\rho(0)$ for all $n$;
if $u_{b,\lambda}^1\neq0$, \eqref{R2.23} implies $\|\bar{u}_n\|<\rho$ for large $n$. Thus our claim is true.
Observe $\|u_{b,\lambda}^1\|\leq\liminf\limits_{n\rightarrow\infty}\|u_n\|\leq\rho$, then it follows from \eqref{R2.21}, \eqref{R2.22} and the Br$\acute{\mbox{e}}$zis-Lieb lemma (see \cite[Lemma 1.32]{W}) that
\begin{align*}
m_{b,\lambda}
&=\lim_{n\to\infty}\mathcal{I}_{b,\lambda}(u_n) \\
&=\lim_{n\to\infty}\left(\mathcal{I}_{b,\lambda}(u_{b,\lambda}^1)+\frac a2\|\bar{u}_n\|^2+\frac b4\|\bar{u}_n\|^4+\frac b2\|\bar{u}_n\|^2\|u_{b,\lambda}^1\|^2-\frac16|\bar{u}_n|_6^6\right) \\
&\geq \mathcal{I}_{b,\lambda}(u_{b,\lambda}^1) \\
&\geq m_{b,\lambda}.
\end{align*}
Hence, $\mathcal{I}_{b,\lambda}(u_{b,\lambda}^1)=m_{b,\lambda}$.
Namely, $u_{b,\lambda}^1\neq0$ is nonnegative local minimizer of $\mathcal{I}_{b,\lambda}$.
Clearly, $\|u_{b,\lambda}^1\|<\rho$.

We next show $\mathcal{I}_{b,\lambda}'(u_{b,\lambda}^1)=0$.
For any $\varphi\in H_0^1(\Omega)$, let $t>0$ be small such that $u_{b,\lambda}^1+t\varphi\in B_\rho(0)$, then there results
\begin{align*}
0
&\leq \mathcal{I}_{b,\lambda}(u_{b,\lambda}^1+t\varphi)-\mathcal{I}_{b,\lambda}(u_{b,\lambda}^1) \notag\\
&=\left(\|u_{b,\lambda}^1+t\varphi\|^2-\|u_{b,\lambda}^1\|^2\right)
\left[\frac a2+\frac b4\left(\|u_{b,\lambda}^1+t\varphi\|^2+\|u_{b,\lambda}^1\|^2\right)\right] \notag\\
&\quad-\frac16\left(|u_{b,\lambda}^1+t\varphi|_6^6-|u_{b,\lambda}^1|_6^6\right)
-\frac\lambda q\left(|u_{b,\lambda}^1+t\varphi|_q^q-|u_{b,\lambda}^1|_q^q\right).
\end{align*}
Dividing both sides of the above inequality by $t$ and letting $t\to0^+$, we derive that, for any $\varphi\in H_0^1(\Omega)$,
\begin{align}\label{R2.25}
0\leq\left(a+b\|u_{b,\lambda}^1\|^2\right)\int_\Omega\nabla u_{b,\lambda}^1\cdot\nabla \varphi dx
-\int_\Omega (u_{b,\lambda}^1)^5\varphi dx
-\lambda\int_\Omega (u_{b,\lambda}^1)^{q-1}\varphi dx.
\end{align}
If replacing $\varphi$ with $-\varphi$ in \eqref{R2.25}, there results that, for any $\varphi\in H_0^1(\Omega)$,
\begin{align*}
\left(a+b\|u_{b,\lambda}^1\|^2\right)\int_\Omega\nabla u_{b,\lambda}^1\cdot\nabla \varphi dx
-\int_\Omega (u_{b,\lambda}^1)^5\varphi dx
-\lambda\int_\Omega (u_{b,\lambda}^1)^{q-1}\varphi dx=0.
\end{align*}
That is, $u_{b,\lambda}^1$ is indeed a nonnegative nontrivial solution of Eq.~\eqref{K1}.
As a consequence of the strong maximum principle of weak solutions, $u_{b,\lambda}^1(\cdot)>0$ in $\Omega$.
Thus we complete the proof of this theorem.
\end{prf}

\vspace{6.00pt}
We may prove the forthcoming lemma and omit the similar details to the proof of Theorem \ref{T2.2}.

\begin{lem}\label{L2.3}
For any $b>0$ and $\lambda\in(0,\lambda_1)$, $\mathcal{J}_{b,\lambda}$ has a local positive minimizer $w_{b,\lambda}^1\in C^2(\Omega)$ with $\mathcal{J}_{b,\lambda}(w_{b,\lambda}^1)=\hat{m}_{b,\lambda}$ and $\mathcal{J}_{b,\lambda}'(w_{b,\lambda}^1)=0$ in $H_0^{-1}(\Omega)$.
\end{lem}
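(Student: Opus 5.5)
We follow the proof of Theorem \ref{T2.2}, with $\mathcal{I}_{b,\lambda}$ replaced by $\mathcal{J}_{b,\lambda}$. Write
\[
\mathcal{J}_{b,\lambda}(u)=\Phi(\|u\|^2)-\frac16|u|_6^6-\frac\lambda q|u|_q^q,
\]
where
\[
\Phi(s)=\frac a2 s+\frac b4 s^2+\frac{b^2S^3}4 s+\frac1{24}\left[b^2S^4+4(a+bs)S\right]^{\frac32}-\frac{(b^2S^4+4aS)^{\frac32}}{24}.
\]
The function $\Phi$ is $C^1$ on $[0,\infty)$, $\Phi(0)=0$, $\Phi'(s)>0$, and $\Phi$ is convex. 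Convexity holds because the map $s\mapsto[\cdots]^{3/2}$ is convex, being an increasing convex function of an affine function.

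\textbf{Existence of a minimizer.} Since $\mathcal{J}_{b,\lambda}$ is even, we may take a nonnegative minimizing sequence $\{u_n\}\subset B_\rho(0)$ for $\hat m_{b,\lambda}$. Passing to a subsequence, $u_n\rightharpoonup w$ with the convergences \eqref{R2.22}. Set $\bar u_n=u_n-w$; then $\|u_n\|^2=\|\bar u_n\|^2+\|w\|^2+o(1)$, and as before $\bar u_n\in B_\rho(0)$ for large $n$.

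The key inequality replacing the explicit splitting in Theorem \ref{T2.2} is superadditivity of $\Phi$: convexity together with $\Phi(0)=0$ gives
\[
\Phi(s+t)\geq\Phi(s)+\Phi(t),\qquad s,t\geq0.
\]
Apply this with $s=\|w\|^2$ and $t=\|\bar u_n\|^2$, using continuity of $\Phi$ to absorb the $o(1)$. Combined with the Br\'ezis--Lieb lemma and strong $L^q$ convergence, this yields
\[
\hat m_{b,\lambda}\geq \mathcal{J}_{b,\lambda}(w)+\liminf_{n\to\infty}\left(\Phi(\|\bar u_n\|^2)-\frac16|\bar u_n|_6^6\right).
\]
The $\liminf$ term is nonnegative, since on $B_\rho(0)$ we have
\[
\Phi(\|v\|^2)-\frac16|v|_6^6\geq \frac a2\|v\|^2+\frac b4\|v\|^4-\frac16|v|_6^6\geq0
\]
by \eqref{R2.21}, because $\mathcal{J}_{b,\lambda}\geq\mathcal{I}_{b,\lambda}$. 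Weak lower semicontinuity of the norm gives $w\in B_\rho(0)$. Hence $\mathcal{J}_{b,\lambda}(w)=\hat m_{b,\lambda}<0$, so $w\neq0$. By Lemma \ref{L2.1}, $\mathcal{J}_{b,\lambda}\geq\alpha>0$ on $S_\rho$, so $\|w\|<\rho$ and $w$ is an interior local minimizer. The one-sided variation argument used for \eqref{R2.25}, applied with $\pm\varphi$, then gives $\mathcal{J}_{b,\lambda}'(w)=0$ in $H^{-1}$; this uses the explicit formula for $\langle\mathcal{J}_{b,\lambda}'(u),v\rangle$.

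\textbf{Positivity and regularity.} Set $w_{b,\lambda}^1:=w$. The equation reads
\[
-M\Delta w=w^5+\lambda w^{q-1},
\]
with the constant
\[
M=a+\frac{b^2S^3}2+b\|w\|^2+\frac{bS}2\sqrt{b^2S^4+4(a+b\|w\|^2)S}>0 .
\]
So $w$ is a nonnegative weak solution of a local semilinear equation with critical growth. The Brezis--Kato/Moser iteration gives $w\in L^\infty(\Omega)$. The strong maximum principle gives $w>0$ in $\Omega$, since the right-hand side is nonnegative and $w\not\equiv0$.

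This regularity step is the main obstacle. The term $w^{q-1}$ with $q-1\in(0,1)$ is only H\"older continuous in $w$. Because $w>0$ in the interior, however, $w^{q-1}$ is locally as smooth as $w$ there. Interior Schauder bootstrapping then takes $w\in W^{2,p}_{loc}$ to $C^{1,\gamma}_{loc}$ to $C^{2,\gamma}_{loc}$, giving $w\in C^2(\Omega)$. This is enough for the statement, which asks for $C^2(\Omega)$ rather than $C^2(\overline\Omega)$.
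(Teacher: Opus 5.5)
Your proposal is correct and follows the paper's route. The paper also says to repeat the minimization argument of Theorem \ref{T2.2} for $\mathcal{J}_{b,\lambda}$ and then bootstraps regularity via Brezis--Kato, $L^p$-theory, Morrey and Schauder. Your superadditivity of the convex function $\Phi$ with $\Phi(0)=0$ is a clean way to make explicit the splitting of the $\frac32$-power term, which the paper leaves implicit. Your interior Schauder bootstrapping using $w>0$ is, if anything, more careful than the paper's claim of $C^{2,\alpha}(\overline{\Omega})$ for every $\alpha\in(0,1)$; that claim is too strong for large $\alpha$, but the statement only asks for $C^2(\Omega)$.
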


\begin{prf}
Repeating the arguments in proving Theorem \ref{T2.2}, we conclude that there exists some positive $w_{b,\lambda}^1\in H_0^1(\Omega)$ such that $\|w_{b,\lambda}^1\|<\rho$,  $\mathcal{J}_{b,\lambda}(w_{b,\lambda}^1)=\hat{m}_{b,\lambda}$ and
\begin{align*}
-\Delta w_{b,\lambda}^1
&=\frac{\big[(w_{b,\lambda}^1)^5+\lambda (w_{b,\lambda}^1)^{q-1}\big](1+w_{b,\lambda}^1)}
{\left[a+\frac{b^2S^3}2+b\|w_{b,\lambda}^1\|^2+\frac{bS}2\sqrt{b^2S^4+4\big(a+b\|w_{b,\lambda}^1\|^2\big)S} \right]\big(1+w_{b,\lambda}^1(x)\big)}
\ \ \ \mbox{in}\ H_0^{-1}(\Omega)\\
&:=h(w_{b,\lambda}^1(x))\big(1+w_{b,\lambda}^1\big).
\end{align*}
Owing to
$$
0\leq h(w_{b,\lambda}^1(x))\leq\frac{1+\lambda_1}a\left[1+\big(w_{b,\lambda}^1(x)\big)^4\right],\ \ \ \forall\ x\in\Omega,
$$
which implies $h\circ w_{b,\lambda}^1\in L^{\frac32}(\Omega)$. Then we deduce from the Brezis-Kato estimate that $w_{b,\lambda}^1\in L^p(\Omega)$ and from the $L^p$-theory of elliptic equation that $w_{b,\lambda}^1\in W^{2,p}(\Omega)$ for any $p\in[1,+\infty)$. Further, it follows from Morrey's embedding theorem that $w_{b,\lambda}^1\in C^{1,\alpha}(\overline{\Omega})$ and from the Schauder estimate that $w_{b,\lambda}^1\in C^{2,\alpha}(\overline{\Omega})$ for any $\alpha\in(0,1)$. That is, $w_{b,\lambda}^1\in C^2(\Omega)$. Thus the proof of this lemma is finished.
\end{prf}

\vspace{8pt}
Further, we prove that $\hat{m}_{b,\lambda}$ equals to the least energy of all critical points of $\mathcal{J}_{b,\lambda}$ for $\lambda>0$ small enough and any $b>0$.
\begin{lem}\label{L2.3+}
There exists some $\lambda_2\in\left(0,\lambda_1\right]$ (independent of $b$) such that
$\hat{m}_{b,\lambda}=\tilde{m}_{b,\lambda}:=\inf\limits_{\widetilde{\mathcal{N}}_{b,\lambda}}\mathcal{J}_{b,\lambda}$ for any $b>0$ and $\lambda\in(0,\lambda_2)$, where
\begin{align*}
\widetilde{\mathcal{N}}_{b,\lambda}=\left\{u\in H_0^1(\Omega)\backslash\{0\}:\mathcal{J}_{b,\lambda}'(u)=0\ \mbox{in}\ H_0^{-1}(\Omega)\right\}.
\end{align*}
\end{lem}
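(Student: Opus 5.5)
Since $w_{b,\lambda}^1$ from Lemma \ref{L2.3} is a nontrivial critical point of $\mathcal{J}_{b,\lambda}$ with $\mathcal{J}_{b,\lambda}(w_{b,\lambda}^1)=\hat m_{b,\lambda}$, it lies in $\widetilde{\mathcal N}_{b,\lambda}$. Hence $\tilde m_{b,\lambda}\le \hat m_{b,\lambda}$ for every $b>0$ and $\lambda\in(0,\lambda_1)$. For the reverse inequality I take an arbitrary $u\in\widetilde{\mathcal N}_{b,\lambda}$ and split into two cases according to $\|u\|$.

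\emph{Case $\|u\|\le\rho$.} Then $u\in B_\rho(0)$, so $\mathcal J_{b,\lambda}(u)\ge \hat m_{b,\lambda}$ directly from the definition of $\hat m_{b,\lambda}$ in Lemma \ref{L2.1}.

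\emph{Case $\|u\|>\rho$.} Here I will show $\mathcal J_{b,\lambda}(u)>0>\hat m_{b,\lambda}$ for all small $\lambda$, with a threshold independent of $b$. Set $s=\|u\|^2$ and $B=b^2S^4+4aS$. I use the identity $\mathcal J_{b,\lambda}(u)=\mathcal J_{b,\lambda}(u)-\frac16\langle \mathcal J_{b,\lambda}'(u),u\rangle$, which holds because $u$ is a critical point. Expanding with the formula for $\mathcal J'_{b,\lambda}$ gives
\begin{align*}
\mathcal J_{b,\lambda}(u)=\frac a3 s+g(s)-\lambda\Big(\frac1q-\frac16\Big)|u|_q^q,
\end{align*}
where
\begin{align*}
g(s)=\frac b{12}s^2+\frac{b^2S^3}{6}s+\frac1{24}(B+4bSs)^{\frac32}-\frac{B^{\frac32}}{24}-\frac{bS}{12}s\sqrt{B+4bSs}.
\end{align*}

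The key step, and the only real obstacle, is to show $g(s)\ge0$ for $s\ge0$. This is exactly where the particular form of the auxiliary terms in $\mathcal J_{b,\lambda}$ matters. I will check that $g(0)=0$ and compute
\begin{align*}
g'(s)=\frac{bs}{6}+\frac{b^2S^3}{6}+\frac{bS}{6}\sqrt{B+4bSs}-\frac{b^2S^2s}{6\sqrt{B+4bSs}}.
\end{align*}
Multiplying through by $6\sqrt{B+4bSs}/b$, nonnegativity of $g'$ follows from
\begin{align*}
S(B+4bSs)\ge 4bS^2s\ge bS^2 s,
\end{align*}
since all the other terms are nonnegative. Therefore $g'\ge0$ and hence $g\ge0$ on $[0,+\infty)$.

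Combining this with H\"older and Sobolev, $|u|_q^q\le |\Omega|^{\frac{6-q}6}S^{-\frac q2}\|u\|^q$. This yields
\begin{align*}
\mathcal J_{b,\lambda}(u)\ge \|u\|^q\Big(\frac a3\rho^{2-q}-\lambda\Big(\frac1q-\frac16\Big)|\Omega|^{\frac{6-q}6}S^{-\frac q2}\Big)\qquad\mbox{whenever }\|u\|>\rho.
\end{align*}
I choose $\lambda_2\in(0,\lambda_1]$ so that the bracket is positive for all $\lambda<\lambda_2$. Since $\rho$ depends only on $a,q,S$, this $\lambda_2$ is independent of $b$.

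In both cases $\mathcal J_{b,\lambda}(u)\ge\hat m_{b,\lambda}$, so $\tilde m_{b,\lambda}\ge\hat m_{b,\lambda}$. Together with the first inequality this gives equality for every $b>0$ and $\lambda\in(0,\lambda_2)$.
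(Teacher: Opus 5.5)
Your proof is correct. It shares the paper's core mechanism but organizes it more economically. Both arguments rest on the same fact: for $\lambda$ below a $b$-independent threshold, a nontrivial critical point of $\mathcal J_{b,\lambda}$ outside $B_\rho(0)$ has positive energy, so only critical points inside $B_\rho(0)$ can compete with $\hat m_{b,\lambda}<0$.

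The paper phrases this through a minimizing sequence $\{u_n\}\subset\widetilde{\mathcal N}_{b,\lambda}$. It uses $\mathcal J_{b,\lambda}(u_n)-\frac14\langle\mathcal J_{b,\lambda}'(u_n),u_n\rangle$ and $\tilde m_{b,\lambda}\le\hat m_{b,\lambda}<0$ to get $\sup_n\|u_n\|\le\rho$. It then runs a Br\'ezis--Lieb/concentration argument (the alternative $\tilde\kappa=0$ or \eqref{R2.3.5}) to prove $u_n\to u$ strongly, before concluding $\tilde m_{b,\lambda}=\mathcal J_{b,\lambda}(u)\ge\hat m_{b,\lambda}$. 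Your pointwise dichotomy shows that this compactness step is not needed for the equality: once $\|u_n\|\le\rho$, one already has $\mathcal J_{b,\lambda}(u_n)\ge\hat m_{b,\lambda}$. Attainment of $\tilde m_{b,\lambda}$ then comes for free from $w_{b,\lambda}^1$.

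The other difference is the multiplier. The paper's $\frac14$ cancels the quartic term, keeps $\frac1{12}|u|_6^6\ge0$, and leaves auxiliary terms that are obviously nonnegative. Your $\frac16$ cancels the critical term instead, so you must check $g\ge0$, which you do correctly. It also follows in one line from $\frac1{24}(B+4bSs)^{3/2}-\frac{bS}{12}s\sqrt{B+4bSs}=\frac1{24}\sqrt{B+4bSs}\,(B+2bSs)\ge\frac1{24}B^{3/2}$. In return, your threshold $\frac{2aqS^{q/2}\rho^{2-q}}{(6-q)|\Omega|^{(6-q)/6}}$ is slightly larger than the paper's $\frac{aqS^{q/2}\rho^{2-q}}{(4-q)|\Omega|^{(6-q)/6}}$ for $1<q<2$. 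It is the same constant the paper later uses to bound $\lambda_3$ in Lemma \ref{L2.4}.
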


\begin{prf}
We first verify that $\tilde{m}_{b,\lambda}$ can be achieved.
There exists a sequence $\{u_n\}\subset \widetilde{\mathcal{N}}_{b,\lambda}$ such that
\begin{align*}
\mathcal{J}_{b,\lambda}(u_n)\xrightarrow{n}\tilde{m}_{b,\lambda}\ \ \ \ \mbox{and}\ \ \ \ \mathcal{J}_{b,\lambda}'(u_n)=0\ \mbox{in}\ H_0^{-1}(\Omega).
\end{align*}
By the H$\ddot{\mbox{o}}$lder and Sobolev inequalities, there results that
\begin{align}\label{R2.3.1}
\tilde{m}_{b,\lambda}+o(1)
&=\mathcal{J}_{b,\lambda}(u_n)-\frac14\left\langle\mathcal{J}_{b,\lambda}'(u_n),u_n\right\rangle \notag\\
&=\frac a4\|u_n\|^2+\frac1{12}|u_n|_6^6-\frac{\lambda(4-q)}{4q}|u_n|_q^q
+\frac{bS}{24}\left[b^2S^4+4(a+b\|u_n\|^2)S\right]^{\frac12}\|u_n\|^2 \notag\\
&\quad+\frac1{24}\left[b^2S^4+4(a+b\|u_n\|^2)S\right]^{\frac12}(b^2S^4+4aS)-\frac1{24}(b^2S^4+4aS)^{\frac32} \notag\\
&\geq \frac a4\|u_n\|^2-\frac{\lambda(4-q)|\Omega|^{\frac{6-q}6}}{4qS^{\frac q2}}\|u_n\|^q.
\end{align}
It follows from Lemma \ref{L2.3} that $\tilde{m}_{b,\lambda}\leq\mathcal{J}_{b,\lambda}(w_{b,\lambda}^1)=\hat{m}_{b,\lambda}<0$. Setting
\begin{align*}
\lambda_2=\min\left\{\lambda_1,\frac{aqS^{\frac q2}\rho^{2-q}}{(4-q)|\Omega|^{\frac{6-q}6}}\right\}
\end{align*}
and letting $\lambda\in(0,\lambda_2)$ from now on, we deduce from \eqref{R2.3.1} that $\sup\limits_n\|u_n\|\leq\rho$.
Then, there exists some $u\in H_0^1(\Omega)$ such that, in the sense of subsequence,
\begin{align}\label{R2.3.2}
u_n\rightharpoonup u\ \mbox{in}\ H_0^1(\Omega);\ \ \ \
u_n\rightarrow u\ \mbox{in}\ L^p(\Omega),\ 1\leq p<6;\ \ \ \
u_n(x)\rightarrow u(x)\ \mbox{a.e.}\ \mbox{in}\ \Omega.
\end{align}
Denote $\tilde{u}_n=u_n-u$ and set $\|\tilde{u}_n\|\rightarrow\tilde{\kappa}$, $|\tilde{u}_n|_6\rightarrow\tilde{\nu}$ in the sense of subsequence.
It is easy to see that $0\leq\tilde{\nu}\leq S^{-\frac12}\tilde{\kappa}$.
By \eqref{R2.3.2} and the Br$\acute{\mbox{e}}$zis-Lieb lemma in \cite{W}, we derive
\begin{align}\label{R2.3.3}
0&=\left\langle\mathcal{J}_{b,\lambda}'(u_n),u_n\right\rangle+o(1) \notag\\
&=a\|u\|^2+\frac{b^2S^3}2\|u\|^2+b\|u\|^4-|u|_6^6-\lambda|u|_q^q \notag\\
&\quad+2b\tilde{\kappa}^2\|u\|^2+a\tilde{\kappa}^2+\frac{b^2S^3}2\tilde{\kappa}^2+b\tilde{\kappa}^4-\tilde{\nu}^6 \notag\\
&\quad+\frac{bS}2\left[b^2S^4+4(a+b\tilde{\kappa}^2+b\|u\|^2)S\right]^{\frac12}(\|u\|^2+\tilde{\kappa}^2).
\end{align}
Observe $\left\{|u_n|^4u_n\right\}$ is bounded in $L^{\frac65}(\Omega)$ and $\left\{|u_n|^{q-2}u_n\right\}$ is bounded in $L^{\frac2{q-1}}(\Omega)$, then \eqref{R2.3.2} implies
\begin{align*}
\int_\Omega|u_n|^4u_nudx\rightarrow |u|_6^6\ \ \ \ \mbox{and}\ \ \ \ \int_\Omega|u_n|^{q-2}u_nudx\rightarrow |u|_q^q.
\end{align*}
Then, from \eqref{R2.3.2} and the Br$\acute{\mbox{e}}$zis-Lieb lemma again, we derive
\begin{align}\label{R2.3.4}
0&=\left\langle\mathcal{J}_{b,\lambda}'(u_n),u\right\rangle+o(1) \notag\\
&=a\|u\|^2+\frac{b^2S^3}2\|u\|^2+b\|u\|^4-|u|_6^6-\lambda|u|_q^q \notag\\
&\quad+b\tilde{\kappa}^2\|u\|^2+\frac{bS}2\left[b^2S^4+4(a+b\tilde{\kappa}^2+b\|u\|^2)S\right]^{\frac12}\|u\|^2.
\end{align}
Subtracting \eqref{R2.3.4} from \eqref{R2.3.3} yields
\begin{align*}
S^{-3}\tilde{\kappa}^6-b\tilde{\kappa}^4-\big(a+\frac{b^2S^3}2+b\|u\|^2\big)\tilde{\kappa}^2
-\frac{bS}2\left[b^2S^4+4(a+b\tilde{\kappa}^2+b\|u\|^2)S\right]^{\frac12}\tilde{\kappa}^2\geq0,
\end{align*}
which together with $\tilde{\kappa}\geq0$ implies $\tilde{\kappa}=0$ or
\begin{align}\label{R2.3.5}
\tilde{\kappa}^2\geq \frac{bS^3+S\sqrt{b^2S^4+\big[4a+2b^2S^3+4b\|u\|^2+2bS(b^2S^4+4aS+4b\|u\|^2S)^{\frac12}\big]S}}2.
\end{align}
We claim $\tilde{\kappa}=0$. If not, the weakly lower semicontinuity of norm implies $\|u\|\leq\liminf\limits_n\|u_n\|\leq\rho$,
then it follows from \eqref{R2.3.2}$-$\eqref{R2.3.5} and the Br$\acute{\mbox{e}}$zis-Lieb lemma that
\begin{align*}
\tilde{m}_{b,\lambda}
&=\mathcal{J}_{b,\lambda}(u_n)+o(1) \\
&=\mathcal{J}_{b,\lambda}(u)
+\frac a2\tilde{\kappa}^2+\frac b4\tilde{\kappa}^4+\frac b2\tilde{\kappa}^2\|u\|^2+\frac{b^2S^3}4\tilde{\kappa}^2-\frac16\tilde{\nu}^6 \\
&\quad+\frac1{24}\left[b^2S^4+4(a+b\tilde{\kappa}^2+b\|u\|^2)S\right]^{\frac32}
-\frac1{24}\left[b^2S^4+4(a+b\|u\|^2)S\right]^{\frac32} \\
&\geq\mathcal{J}_{b,\lambda}(u)+\frac a3\tilde{\kappa}^2+\frac b{12}\tilde{\kappa}^4+\frac b3\tilde{\kappa}^2\|u\|^2+\frac{b^2S^3}6\tilde{\kappa}^2 \\
&\quad+\frac{bS}{12}\left[b^2S^4+4(a+b\tilde{\kappa}^2+b\|u\|^2)S\right]^{\frac12}\tilde{\kappa}^2 \\
&> \hat{m}_{b,\lambda},
\end{align*}
which is impossible since $\tilde{m}_{b,\lambda}\leq\hat{m}_{b,\lambda}$.
Then, $\tilde{\kappa}=0$. That is, up to a subsequence, $u_n\rightarrow u$ in~$H_0^1(\Omega)$.
Naturally, $\mathcal{J}_{b,\lambda}(u)=\tilde{m}_{b,\lambda}$ and $\mathcal{J}_{b,\lambda}'(u)=0$.
By the weakly lower semicontinuity of norm again, $\|u\|\leq\rho$.
Hence, $\tilde{m}_{b,\lambda}=\mathcal{J}_{b,\lambda}(u)\geq\hat{m}_{b,\lambda}$,
which and $\tilde{m}_{b,\lambda}\leq\hat{m}_{b,\lambda}$ yield $\tilde{m}_{b,\lambda}=\hat{m}_{b,\lambda}$. This lemma is proved.
\end{prf}

\subsection{Existence of positive solution with positive energy}

In this section, we prove Eq.~$\mathrm{(\ref{K1})}$ has a positive solution with positive energy by a mountain pass theorem originating from the local minimizer $w_{b,\lambda}^1$ and the concentration-compactness principle.~Set
\begin{align*}
\bar{\mathcal{I}}_{b,\lambda}(u)=\frac a2\|u\|^2+\frac b4\|u\|^4-\frac16|u^+|_6^6-\frac\lambda q|u^+|_q^q,\ \ \ \ \forall\ u\in H_0^1(\Omega)
\end{align*}
and
\begin{align*}
\bar{\mathcal{J}}_{b,\lambda}(u)
&=\bar{\mathcal{I}}_{b,\lambda}(u)+\frac1{24}\left[b^2S^4+4\left(a+b\|u\|^2\right)S\right]^{\frac32} \\
&\quad\quad+\frac{b^2S^3}4\|u\|^2-\frac{\left(b^2S^4+4aS\right)^{\frac32}}{24},\ \ \ \ \forall\ u\in H_0^1(\Omega).
\end{align*}
Clearly, $\mathcal{I}_{b,\lambda}(u)=\bar{\mathcal{I}}_{b,\lambda}(u)$ if $u>0$,
and critical point of $\bar{\mathcal{I}}_{b,\lambda}$ corresponds to positive solution of Eq.~\eqref{K1}.

\begin{lem}\label{L2.4}
There exists some $\lambda_3>0$ (independent of $b$) such that, for any $b>0$ and $\lambda\in(0,\lambda_3)$,
the functional $\bar{\mathcal{I}}_{b,\lambda}$ satisfies the $(PS)_c$-condition with $c\in\left(-\infty,c_b+\tilde{m}_{b,\lambda}\right)$, where
\begin{align}\label{R2.4.0}
c_b=\frac{b^3S^6}{24}+\frac{abS^3}{4}+\frac{\left(b^2S^4+4aS\right)^{\frac32}}{24}.
 \end{align}
\end{lem}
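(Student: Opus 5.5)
The plan is the standard bounded-sequence, weak-limit and energy-splitting scheme. The new ingredient is to measure the energy lost to concentration against the auxiliary functional $\mathcal{J}_{b,\lambda}$, whose least critical level is $\tilde m_{b,\lambda}$ by Lemma \ref{L2.3+}.

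\textbf{Step 1 (boundedness and the limit equation).} Let $\{u_n\}$ be a $(PS)_c$ sequence for $\bar{\mathcal{I}}_{b,\lambda}$. Testing with $u_n^-$ gives $\|u_n^-\|\to0$. Next, $\bar{\mathcal{I}}_{b,\lambda}(u_n)-\frac16\langle\bar{\mathcal{I}}_{b,\lambda}'(u_n),u_n\rangle\geq\frac a3\|u_n\|^2-C\lambda\|u_n\|^q$, so $\{u_n\}$ is bounded, with bounds independent of $b$. Along a subsequence:
\begin{itemize}
\item $u_n\rightharpoonup u\geq0$ in $H_0^1(\Omega)$ and $u_n\to u$ in $L^p(\Omega)$ for $p<6$;
\item $\|u_n\|^2\to\|u\|^2+\kappa^2$, where $\kappa^2:=\lim\|v_n\|^2$ and $v_n=u_n-u$;
\item $|u_n^+|_6^6=|u|_6^6+|v_n^+|_6^6+o(1)$ with $|v_n^+|_6\to\nu$, by the Br\'ezis--Lieb lemma.
\end{itemize}
Passing to the limit in $\langle\bar{\mathcal{I}}_{b,\lambda}'(u_n),\varphi\rangle$ shows that $u$ solves
\[
-\big(a+b\|u\|^2+b\kappa^2\big)\Delta u=u^5+\lambda u^{q-1}.
\]
Subtracting $\langle\bar{\mathcal I}'(u_n),u\rangle$ from $\langle\bar{\mathcal I}'(u_n),u_n\rangle$, exactly as in the proof of Lemma \ref{L2.3+}, gives
\[
\big(a+b\|u\|^2+b\kappa^2\big)\kappa^2=\nu^6\leq S^{-3}\kappa^6 .
\]
Hence either $\kappa=0$, which means strong convergence and we are done, or $\kappa^2\geq\kappa_0^2$, where
\[
\kappa_0^2:=\tfrac12\Big(bS^3+S\sqrt{b^2S^4+4(a+b\|u\|^2)S}\Big).
\]

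\textbf{Step 2 (energy splitting).} Suppose $\kappa>0$. Eliminating $\nu^6$ with the identity above gives
\[
c=\bar{\mathcal{I}}_{b,\lambda}(u)+\frac a3\kappa^2+\frac b{12}\kappa^4+\frac b3\kappa^2\|u\|^2 ,
\]
and the added terms are increasing in $\kappa^2$. The key observation is that $a+b\|u\|^2+b\kappa_0^2$ is exactly the coefficient
\[
a+\frac{b^2S^3}{2}+b\|u\|^2+\frac{bS}{2}\sqrt{b^2S^4+4(a+b\|u\|^2)S}
\]
appearing in $\mathcal{J}_{b,\lambda}'$. Evaluating the added terms at $\kappa^2=\kappa_0^2$ and expanding the square roots should give the algebraic identity
\[
\mathcal{I}_{b,\lambda}(u)+\frac a3\kappa_0^2+\frac b{12}\kappa_0^4+\frac b3\kappa_0^2\|u\|^2=\mathcal{J}_{b,\lambda}(u)+c_b .
\]
I expect this to reproduce precisely the constant $c_b$ in \eqref{R2.4.0}, since $c_b$ is the value at $u=0$. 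So $c\geq\mathcal{J}_{b,\lambda}(u)+c_b$.

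\textbf{Step 3 (comparison with $\tilde m_{b,\lambda}$; main obstacle).} If $\kappa=\kappa_0$, then $u$ is a critical point of $\mathcal{J}_{b,\lambda}$. In that case either $u=0$, giving $\mathcal{J}_{b,\lambda}(u)=0>\tilde m_{b,\lambda}$, or $u\in\widetilde{\mathcal N}_{b,\lambda}$, giving $\mathcal{J}_{b,\lambda}(u)\geq\tilde m_{b,\lambda}$. Either way $c\geq c_b+\tilde m_{b,\lambda}$, a contradiction.

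The hard case is $\kappa>\kappa_0$. Then $u$ is only a supersolution-type point, and
\[
\langle\mathcal{J}_{b,\lambda}'(u),u\rangle=-b(\kappa^2-\kappa_0^2)\|u\|^2\leq0 .
\]
The extra energy $\frac a3(\kappa^2-\kappa_0^2)+\dots$ must compensate. My first attempt is the following.
\begin{itemize}
\item Keep the full $\kappa$-dependent lower bound from Step 2.
\item Estimate $\mathcal{J}_{b,\lambda}(u)-\frac14\langle\mathcal{J}_{b,\lambda}'(u),u\rangle$ from below, as in \eqref{R2.3.1}, by $\frac a4\|u\|^2-C\lambda\|u\|^q$.
\item Absorb the term $\frac b4(\kappa^2-\kappa_0^2)\|u\|^2$ into the surplus $\frac b3(\kappa^2-\kappa_0^2)\|u\|^2$.
\item Minimize $\frac a4 t^2-C\lambda t^q$ over $t\geq0$; its minimum is $-C'\lambda^{2/(2-q)}$.
\end{itemize}
Then I will choose $\lambda_3$, independent of $b$, small enough that this quantity is at least $\tilde m_{b,\lambda}$. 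That requires a $b$-independent lower bound relating $\hat m_{b,\lambda}=\tilde m_{b,\lambda}$ to $\lambda^{2/(2-q)}$, obtained from the fiber $t\mapsto\mathcal{J}_{b,\lambda}(tu)$ in $B_\rho(0)$. If this direct comparison is too crude, the fallback is the fibering map: use $\langle\mathcal{J}_{b,\lambda}'(u),u\rangle\leq0$ and Lemma \ref{L2.1} to locate the local minimum of $s\mapsto\mathcal{J}_{b,\lambda}(su)$ inside $B_\rho(0)$, so that $\mathcal{J}_{b,\lambda}(u)\geq\hat m_{b,\lambda}=\tilde m_{b,\lambda}$. In either route the conclusion is $c\geq c_b+\tilde m_{b,\lambda}$, contradicting $c<c_b+\tilde m_{b,\lambda}$. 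Therefore $\kappa=0$ and $u_n\to u$ strongly.
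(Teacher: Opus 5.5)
Your Steps 1 and 2 match the paper's argument: the same splitting, the same identity $(a+b\|u\|^2+b\kappa^2)\kappa^2=\nu^6$, and the same algebra giving $c\ge c_b+\mathcal J_{b,\lambda}(u)$ for every $\kappa\ge\kappa_0$. Your treatment of $\kappa=\kappa_0$ is also correct. The gap is in Step 3, where you split on the wrong quantity.

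\textbf{The case $\|u\|\le\rho$ is not hard.} Here $\hat m_{b,\lambda}$ is the infimum of $\mathcal J_{b,\lambda}$ over the whole ball $B_\rho(0)$, not over critical points. So $\mathcal J_{b,\lambda}(u)\ge\hat m_{b,\lambda}=\tilde m_{b,\lambda}$ holds whenever $\|u\|\le\rho$, whether or not $\kappa>\kappa_0$; this is exactly what Lemma \ref{L2.3+} is for.

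\textbf{The case $\|u\|>\rho$ is what remains, and neither of your routes handles it.}

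Route 1 ends by requiring $-C'\lambda^{2/(2-q)}\ge\tilde m_{b,\lambda}$ with $C'$ independent of $b$. That is an \emph{upper} bound $\tilde m_{b,\lambda}\le -C'\lambda^{2/(2-q)}$ holding uniformly in $b$, and it is false. By \eqref{R2.21} and the monotonicity of the extra terms in $\mathcal J_{b,\lambda}$, every $u\in B_\rho(0)$ satisfies $\mathcal J_{b,\lambda}(u)\ge\frac{b^2S^3}{4}\|u\|^2-\frac{\lambda|\Omega|^{(6-q)/6}}{qS^{q/2}}\|u\|^q$. Hence $\tilde m_{b,\lambda}=\hat m_{b,\lambda}\ge -C''\lambda^{2/(2-q)}b^{-2q/(2-q)}$, with $C''$ depending only on $q,S,|\Omega|$. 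So the required inequality fails for every $\lambda$ once $b$ is large.

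Route 2 does not help either. When $\|u\|>\rho$ and $\langle\mathcal J_{b,\lambda}'(u),u\rangle\le0$, the point $s=1$ may lie on the descending branch of $s\mapsto\mathcal J_{b,\lambda}(su)$, beyond the barrier $\mathcal J_{b,\lambda}\ge\alpha$ on $S_\rho$. There $\mathcal J_{b,\lambda}$ is unbounded below, so nothing in your argument gives $\mathcal J_{b,\lambda}(u)\ge\hat m_{b,\lambda}$.

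\textbf{The missing idea.} Split on $\|u\|\le\rho$ versus $\|u\|>\rho$. In the second case, do not compare with $\tilde m_{b,\lambda}$ at all; instead show $c>c_b$, which contradicts $c<c_b+\tilde m_{b,\lambda}<c_b$. The paper does this by eliminating $|u^+|_6^6$ from $\bar{\mathcal I}_{b,\lambda}(u)$ via \eqref{R2.4.5}, then using $\kappa\ge\kappa_0$, $\|u\|>\rho$ and $\lambda<\lambda_3$.

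Your own chain closes this case with a one-line change. After absorbing $\frac b4(\kappa^2-\kappa_0^2)\|u\|^2$ you have $c\ge c_b+\frac a4\|u\|^2-C\lambda\|u\|^q$, with $C=\frac{(4-q)|\Omega|^{(6-q)/6}}{4qS^{q/2}}$. Do not minimize over $t\ge0$. Use $\|u\|>\rho$ instead to get $c>c_b+\|u\|^q\big(\frac a4\rho^{2-q}-C\lambda\big)\ge c_b$ whenever $\lambda\le\frac{aqS^{q/2}\rho^{2-q}}{(4-q)|\Omega|^{(6-q)/6}}$. This threshold is independent of $b$; it is the second entry in the definition of $\lambda_2$. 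With this split, the separate discussion of $\kappa=\kappa_0$ becomes unnecessary.
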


\begin{prf}
Let $\{u_n\}\subset H_0^1(\Omega)$  be a $(PS)_c$-sequence for the functional $\bar{\mathcal{I}}_{b,\lambda}$ at $c\in(-\infty,c_b+\tilde{m}_{b,\lambda})$, namely,
\begin{align}\label{R2.4.1}
\bar{\mathcal{I}}_{b,\lambda}(u_n)\xrightarrow{n}c\ \ \ \ \mbox{and}\ \ \ \ \bar{\mathcal{I}}_{b,\lambda}'(u_n)\xrightarrow{n}0\ \mbox{in}\ H_0^{-1}(\Omega).
\end{align}
For any given $\frac16\leq\theta\leq\frac14$, from the H$\ddot{\mbox{o}}$lder and Sobolev inequalities it easily follows that, for $n$ large,
\begin{align*}
c+1+o(\|u_n\|)
&\geq \bar{\mathcal{I}}_{b,\lambda}(u_n)-\theta\left\langle \bar{\mathcal{I}}_{b,\lambda}'(u_n),u_n\right\rangle\\
&=\frac{(1-2\theta)a}2\|u_n\|^2+\frac{(1-4\theta)b}4\|u_n\|^4+\frac{6\theta-1}6|u_n^+|_6^6-\frac{\lambda(1-\theta q)}q|u_n^+|_q^q\\
&\geq\frac{(1-2\theta)a}2\|u_n\|^2-\frac{\lambda(1-\theta q)|\Omega|^{\frac{6-q}6}}{qS^{\frac q2}}\|u_n\|^q,
\end{align*}
which implies that $\{u_n\}$ is bounded in $H_0^1(\Omega)$. Then, there exists some $u\in H_0^1(\Omega)$ such that, in the sense of subsequence,
\begin{align}\label{R2.4.2}
u_n\rightharpoonup u\ \mbox{in}\ H_0^1(\Omega);\ \ \ \
u_n\rightarrow u\ \mbox{in}\ L^p(\Omega),\ 1\leq p<6;\ \ \ \
u_n(x)\rightarrow u(x)\ \mbox{a.e.}\ \mbox{in}\ \Omega.
\end{align}
Since $\left\{(u_n^+)^5\right\}$ is bounded in $L^{\frac65}(\Omega)$ and $\left\{(u_n^+)^{q-1}\right\}$ is bounded in $L^{\frac2{q-1}}(\Omega)$, then \eqref{R2.4.2} implies
\begin{align}\label{R2.4.3}
\int_\Omega(u_n^+)^5udx\rightarrow |u^+|_6^6\ \ \ \ \mbox{and}\ \ \ \ \int_\Omega(u_n^+)^{q-1}udx\rightarrow |u^+|_q^q.
\end{align}
Letting $\bar{u}_n=u_n-u$ and setting $\|\bar{u}_n\|\rightarrow \kappa$, $|u_n^+-u^+|_6\rightarrow \nu$ in the sense of subsequence, then the definition of $S$ implies $\nu\leq S^{-\frac12}\kappa$, we deduce from \eqref{R2.4.1}$-$\eqref{R2.4.3} and the Br$\acute{\mbox{e}}$zis-Lieb lemma in \cite{W}~that
\begin{align}
0=\left\langle \bar{\mathcal{I}}_{b,\lambda}'(u_n),u_n\right\rangle+o(1)
&=a\|u\|^2+b\|u\|^4-|u^+|_6^6-\lambda|u^+|_q^q+a\kappa^2+b\kappa^4+2b\kappa^2\|u\|^2-\nu^6, \label{R2.4.4}\\
0=\left\langle \bar{\mathcal{I}}_{b,\lambda}'(u_n),u\right\rangle+o(1)
&=a\|u\|^2+b\|u\|^4-|u^+|_6^6-\lambda|u^+|_q^q+b\kappa^2\|u\|^2. \label{R2.4.5}
\end{align}
Combining \eqref{R2.4.4} with \eqref{R2.4.5}, we conclude
\begin{align}\label{R2.4.8}
S^{-3}\kappa^6-b\kappa^4-\left(a+b\|u\|^2\right)\kappa^2\geq 0.
\end{align}
It is clear that $\kappa\geq0$. We claim $\kappa=0$. Otherwise, \eqref{R2.4.8} implies
$$
\kappa^2\geq \frac12\left[bS^3+S\sqrt{b^2S^4+4(a+b\|u\|^2)S}\right].
$$
From \eqref{R2.4.1}, \eqref{R2.4.2}, \eqref{R2.4.4}, \eqref{R2.4.5} and the Br$\acute{\mbox{e}}$zis-Lieb lemma, it follows that
\begin{align}\label{R2.4.6}
c
&=\bar{\mathcal{I}}_{b,\lambda}(u_n)+o(1)
=\bar{\mathcal{I}}_{b,\lambda}(u)+\frac{a}2\kappa^2+\frac b4\kappa^4+\frac b2\kappa^2\|u\|^2-\frac16\nu^6 \notag\\
&=\bar{\mathcal{I}}_{b,\lambda}(u)+\frac{a}3\kappa^2+\frac b{12}\kappa^4+\frac b3\kappa^2\|u\|^2.
\end{align}
If $\|u\|\leq\rho$, by \eqref{R2.4.6}, the definition of $\hat{m}_{b,\lambda}$ and Lemma \ref{L2.3+} we derive
\begin{align*}
c
&\geq \bar{\mathcal{I}}_{b,\lambda}(u)
+\frac{a}6\left[bS^3+S\sqrt{b^2S^4+4(a+b\|u\|^2)S}\right] \\
&\quad + \frac b6\|u\|^2\left[bS^3+S\sqrt{b^2S^4+4(a+b\|u\|^2)S}\right] \\
&\quad + \frac b{48}\left[2b^2S^6+4(a+b\|u\|^2)S^3+2bS^4\sqrt{b^2S^4+4(a+b\|u\|^2)S}\right] \\
&=\bar{\mathcal{I}}_{b,\lambda}(u)+\frac{abS^3}4+\frac{b^3S^6}{24}
+\frac1{24}\left[b^2S^4+4\left(a+b\|u\|^2\right)S\right]^{\frac32}+\frac14b^2S^3\|u\|^2 \\
&\geq c_b+\mathcal{J}_{b,\lambda}(u) \\
&\geq c_b+\tilde{m}_{b,\lambda},
\end{align*}
which contradicts with the premise $c<c_b+\tilde{m}_{b,\lambda}$ of this lemma. If $\|u\|>\rho$, due to \eqref{R2.4.5}, there holds
\begin{align*}
\bar{\mathcal{I}}_{b,\lambda}(u)
&\geq\frac a3\|u\|^2+\frac b{12}\|u\|^4-\frac{(6-q)\lambda}{6q}|u|_q^q-\frac{b\kappa^2}6\|u\|^2.
\end{align*}
Take $\lambda_3>0$ such that
\begin{align*}
\lambda_3
\leq \frac{2aqS^{\frac q2}\rho^{2-q}}{(6-q)|\Omega|^{\frac{6-q}6}}
<\frac{6qS^{\frac q2}}{(6-q)|\Omega|^{\frac{6-q}6}}\left[\left(\frac {bS}{12}\sqrt{b^2S^4+4(a+b\rho^2)S}
+\frac a3+\frac{b^2S^3}6\right)\rho^{2-q}+\frac b{12}\rho^{4-q}\right].
\end{align*}
Thereby, we deduce from \eqref{R2.4.6}, the H$\ddot{\mbox{o}}$lder and Sobolev inequalities that, for any $b>0$ and $\lambda\in(0,\lambda_3)$,
\begin{align*}
c&\geq\frac{a}3\kappa^2+\frac b{12}\kappa^4+\frac b6\kappa^2\|u\|^2
+\frac a3\|u\|^2+\frac b{12}\|u\|^4-\frac{(6-q)\lambda}{6q}|u|_q^q \\
&\geq \frac{a}6\left[bS^3+S\sqrt{b^2S^4+4(a+b\|u\|^2)S}\right] \\
&\quad+\frac b{48}\left[2b^2S^6+4(a+b\|u\|^2)S^3+2bS^4\sqrt{b^2S^4+4(a+b\|u\|^2)S}\right] \\
&\quad+\frac b{12}\|u\|^2\left[bS^3+S\sqrt{b^2S^4+4(a+b\|u\|^2)S}\right] \\
&\quad+\frac a3\|u\|^2+\frac b{12}\|u\|^4-\frac{(6-q)\lambda}{6q}|u|_q^q \\
&\geq\frac{abS^3}4+\frac{b^3S^6}{24}+\frac1{24}\left(b^2S^4+4aS\right)\sqrt{b^2S^4+4(a+b\|u\|^2)S} \\
&\quad+\frac {bS}{12}\|u\|^2\sqrt{b^2S^4+4(a+b\|u\|^2)S}
+\frac 16\left(2a+b^2S^3\right)\|u\|^2+\frac b{12}\|u\|^4-\frac{(6-q)|\Omega|^{\frac{6-q}6}\lambda}{6qS^{\frac q2}}\|u\|^q \\
&> c_b+\|u\|^q\left[\left(\frac {bS}{12}\sqrt{b^2S^4+4(a+b\rho^2)S}
+\frac a3+\frac{b^2S^3}6\right)\rho^{2-q}+\frac b{12}\rho^{4-q}-\frac{(6-q)|\Omega|^{\frac{6-q}6}\lambda_3}{6qS^{\frac q2}}\right] \\
&> c_b,
\end{align*}
which is impossible since $c<c_b+\tilde{m}_{b,\lambda}<c_b$. Thus $\kappa=0$, and the proof of this lemma is completed.
\end{prf}

\vspace{6pt}

It is well known that the best Sobolev constant $S$ is attained when $\Omega=\mathbb{R}^3$ by the functions
\begin{align*}
v_\varepsilon(\cdot)=\frac{3^{\frac14}\varepsilon^{\frac12}}{\left(\varepsilon^2+|\cdot|^2\right)^{\frac12}},\ \ \ \forall\ \varepsilon>0,
\end{align*}
which satisfies
\begin{align*}
\int_{\mathbb{R}^3}|\nabla v_\varepsilon|^2dx=\int_{\mathbb{R}^3}|v_\varepsilon|^6dx=S^{\frac32}.
\end{align*}
Without loss of generality, we assume $0\in\Omega$ and $\overline{\mathbb{B}}_{2r}(0)\subset\Omega$ for some $r>0$. Letting $\varphi\in C_0^\infty\left(\Omega,[0,1]\right)$ such that $\varphi(x)\equiv1$ in $\mathbb{B}_r(0)$ while $\varphi(x)\equiv0$ in $\overline{\Omega}\backslash\mathbb{B}_{2r}(0)$ and setting $u_\varepsilon=\varphi v_\varepsilon$, by the similar calculation to \cite{BN,W} we conclude that, as $\varepsilon\rightarrow0^+$,
\begin{align}\label{R8}
\|u_\varepsilon\|^2=S^{\frac32}+O(\varepsilon),\ \ \ \ |u_\varepsilon|_6^6=S^{\frac32}+O(\varepsilon^3)
\end{align}
and
\begin{align}\label{R9}
\left|\int_\Omega\nabla w_{b,\lambda}^1\cdot\nabla u_\varepsilon dx\right|=O(\varepsilon^\frac12).
\end{align}

\vspace{6pt}

\begin{lem}\label{L2.5}
For any $b>0$ and $\lambda\in(0,\lambda_2)$, $\max\limits_{t\geq0}\bar{\mathcal{I}}_{b,\lambda}\big(w_{b,\lambda}^1+tu_\varepsilon\big)<c_b+\tilde{m}_{b,\lambda}$ for $\varepsilon>0$ small enough.
\end{lem}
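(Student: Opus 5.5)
The plan is to expand $\bar{\mathcal{I}}_{b,\lambda}(w+tu_\varepsilon)$, where $w:=w_{b,\lambda}^1$. I would use the Euler--Lagrange equation of $\mathcal{J}_{b,\lambda}$ at $w$ from Lemma \ref{L2.3}, and compare the remaining ``bubble part'' with the explicit level $c_b$ in \eqref{R2.4.0}. First, since $w$ is bounded, $\bar{\mathcal{I}}_{b,\lambda}(w+tu_\varepsilon)\to-\infty$ as $t\to\infty$ uniformly for small $\varepsilon$, because the $-\frac16t^6|u_\varepsilon|_6^6$ term dominates. Also $\bar{\mathcal{I}}_{b,\lambda}(w+tu_\varepsilon)$ is close to $\mathcal{I}_{b,\lambda}(w)<\tilde m_{b,\lambda}+c_b$ for $t$ small, since $\mathcal{I}_{b,\lambda}(w)\le \mathcal{J}_{b,\lambda}(w)=\tilde m_{b,\lambda}$ by Lemma \ref{L2.3+}. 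So it suffices to treat $t\in[t_0,T_0]$ with $t_0,T_0>0$ independent of $\varepsilon$.

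\textbf{Step 1 (expansion).} Write $\beta_\varepsilon=\int_\Omega\nabla w\cdot\nabla u_\varepsilon dx$ and $A=\|w\|^2$. Since $w,u_\varepsilon\geq0$, I would use the elementary inequalities
\[
(w+tu_\varepsilon)^6\geq w^6+t^6u_\varepsilon^6+6tw^5u_\varepsilon+6t^5wu_\varepsilon^5
\quad\text{and}\quad
(w+tu_\varepsilon)^q\geq w^q+qtw^{q-1}u_\varepsilon .
\]
Testing $\mathcal{J}_{b,\lambda}'(w)=0$ with $u_\varepsilon$ gives
\[
\int_\Omega w^5u_\varepsilon+\lambda\int_\Omega w^{q-1}u_\varepsilon=\Big[a+\tfrac{b^2S^3}{2}+bA+\tfrac{bS}{2}\sqrt{b^2S^4+4(a+bA)S}\Big]\beta_\varepsilon .
\]
Substituting this in, the terms linear in $\beta_\varepsilon$ coming from $\frac a2\|\cdot\|^2$ and from the $bA$ part of $\frac b4\|\cdot\|^4$ cancel. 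What remains is
\[
\bar{\mathcal{I}}_{b,\lambda}(w+tu_\varepsilon)\le \mathcal{I}_{b,\lambda}(w)+g_\varepsilon(t)+C\beta_\varepsilon-t^5\int_\Omega wu_\varepsilon^5dx+O(\varepsilon),
\]
where $g_\varepsilon(t)=\frac{a+bA}{2}t^2\|u_\varepsilon\|^2+\frac b4t^4\|u_\varepsilon\|^4-\frac{t^6}{6}|u_\varepsilon|_6^6$. The constant $C$ is uniform on $[t_0,T_0]$, and the $\beta_\varepsilon^2$ terms are absorbed into $O(\varepsilon)$ by \eqref{R9}.

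\textbf{Step 2 (bubble level).} By \eqref{R8}, $\max_t g_\varepsilon(t)=\max_{s\ge0}\big[\frac{a+bA}{2}S^{3/2}s+\frac b4S^3s^2-\frac{S^{3/2}}6s^3\big]+O(\varepsilon)$. Solving the quadratic critical point equation explicitly gives the value
\[
\frac{b^3S^6}{24}+\frac{(a+bA)bS^3}{4}+\frac1{24}\big[b^2S^4+4(a+bA)S\big]^{3/2}.
\]
Comparing with the definitions of $c_b$ and $\mathcal{J}_{b,\lambda}$, this equals exactly
\[
c_b+\Big(\mathcal{J}_{b,\lambda}(w)-\mathcal{I}_{b,\lambda}(w)\Big).
\]
This is precisely why $\mathcal{J}_{b,\lambda}$ was built that way. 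Hence $\mathcal{I}_{b,\lambda}(w)+\max g_\varepsilon=c_b+\tilde m_{b,\lambda}+O(\varepsilon)$.

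\textbf{Step 3 (the gain, main obstacle).} It remains to show that $C\beta_\varepsilon-t_0^5\int_\Omega wu_\varepsilon^5dx<0$ with a margin of order $\varepsilon^{1/2}$. Both quantities are of order $\varepsilon^{1/2}$, so \eqref{R9} alone is not enough, and this is where I expect the difficulty. Since $w\in C^2$ is positive, $w\geq m_0>0$ on $\overline{\mathbb B}_{r_0}(0)$. Hence, for $r\le r_0$, $\int_\Omega wu_\varepsilon^5\ge m_0\int_{\mathbb B_r}v_\varepsilon^5\ge c_1\varepsilon^{1/2}$, with $c_1$ essentially independent of $r$ because the mass of $v_\varepsilon^5$ concentrates at scale $\varepsilon$. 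On the other hand, integrating by parts and using the equation for $w$ and $|w|_\infty<\infty$,
\[
\beta_\varepsilon=\int_\Omega(-\Delta w)u_\varepsilon\le C'\int_{\mathbb B_{2r}}v_\varepsilon\le C''r^2\varepsilon^{1/2}.
\]
So I would first shrink the cutoff radius $r$, which is free in the construction of $u_\varepsilon$, until $CC''r^2<t_0^5c_1/2$. I would also check that $t_0,T_0$ and the constants do not deteriorate as $r$ shrinks, since only the $O(\varepsilon)$ remainders in \eqref{R8} depend on $r$. Then the total error is at most $-\frac{c_1t_0^5}{2}\varepsilon^{1/2}+O(\varepsilon)<0$ for $\varepsilon$ small, which gives the strict inequality of the lemma.
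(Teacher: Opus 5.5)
Your proof is correct, but Step 3 takes a genuinely different route from the paper's. Steps 1 and 2 match the paper: the same two elementary inequalities, the same testing of $\mathcal{J}_{b,\lambda}'(w)=0$ with $u_\varepsilon$ (leaving the linear term $bt(t^2\|u_\varepsilon\|^2-t_m^2S^{\frac32})\beta_\varepsilon$), and the same identity $\mathcal{I}_{b,\lambda}(w)+g(t_m)=c_b+\mathcal{J}_{b,\lambda}(w)=c_b+\tilde m_{b,\lambda}$.

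\textbf{The paper's route.} It keeps the cutoff radius $r$ fixed and uses the exact form of the coefficient of $\beta_\varepsilon$. It passes to a subsequential limit $t_*$ of the maximizers $t_\varepsilon$ and splits into two cases.
\begin{itemize}
\item If $t_*\neq t_m$, the strict gap $g(t_*)<g(t_m)$ absorbs all $O(\varepsilon^{\frac12})$ errors.
\item If $t_*=t_m$, the coefficient $bt_\varepsilon(t_\varepsilon^2\|u_\varepsilon\|^2-t_m^2S^{\frac32})$ tends to $0$. The $\beta_\varepsilon$-term is then $o(\varepsilon^{\frac12})$, and the gain $-t_\varepsilon^5\int_\Omega wu_\varepsilon^5dx\le -c\,\varepsilon^{\frac12}$ wins.
\end{itemize}
This is what the specific choice of $\mathcal{J}_{b,\lambda}$ buys. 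Its Euler--Lagrange coefficient is $a+b\|w\|^2+bt_m^2S^{\frac32}$, so the linear term vanishes at the optimal amplitude $t_m$ of the bubble, for any fixed cutoff.

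\textbf{Your route.} You bound the coefficient crudely and win by localization. Your estimates check out:
\begin{itemize}
\item $\beta_\varepsilon=\int_\Omega(-\Delta w)u_\varepsilon dx\le 8\pi 3^{\frac14}|\Delta w|_\infty r^2\varepsilon^{\frac12}$. Note that $\beta_\varepsilon\ge0$ because $-\Delta w>0$; you use this implicitly.
\item For $\varepsilon\le r$, $\int_\Omega wu_\varepsilon^5dx\ge 3^{\frac54}\min_{\overline{\mathbb B}_{r_0}(0)}w\int_{\mathbb B_1(0)}(1+|y|^2)^{-\frac52}dy\,\varepsilon^{\frac12}$.
\item The quantities $t_0$, $T_0$, $C$ are indeed independent of $r$, since only the $O(\varepsilon)$ remainders in \eqref{R8} depend on $r$.
\end{itemize}
This avoids the subsequence and case analysis. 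In fact, in your route the cancellation from $\mathcal{J}_{b,\lambda}'(w)=0$ is not needed. The cross terms $-t\int_\Omega w^5u_\varepsilon dx$ and $-\lambda t\int_\Omega w^{q-1}u_\varepsilon dx$ are nonpositive and could simply be dropped. What remains is $(at+bt\|w\|^2+bt^3\|u_\varepsilon\|^2)\beta_\varepsilon=O(r^2\varepsilon^{\frac12})$. So $\mathcal{J}_{b,\lambda}$ enters only through the level identity and the bound on $\Delta w$.

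\textbf{The price.} Your $r$ must be chosen depending on $b$ and $\lambda$, through $|\Delta w|_\infty$, $\min w$, $t_0$ and $T_0$. You therefore prove the estimate for a re-localized test family, not for the paper's a priori fixed cutoff $\varphi$. This is harmless for Theorem~\ref{T2.6}, which only needs one admissible path staying below $c_b+\tilde m_{b,\lambda}$. It is, however, formally weaker than the lemma as stated.
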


\begin{prf}
Define for $t\geq0$ the function
\begin{align*}
g(t)=\frac{t^2}2\big(a+b\|w_{b,\lambda}^1\|^2\big)S^{\frac32}+\frac {bt^4}4S^3-\frac{t^6}6S^{\frac32}.
\end{align*}
Through direct calculation, we deduce that $g$ admits a unique maximum point $t_m$ on $[0,+\infty)$, where
\begin{align*}
t_m
=\left[\frac{bS^{\frac32}+\sqrt{b^2S^3+4\big(a+b\|w_{b,\lambda}^1\|^2\big)}}{2}\right]^{\frac12}.
\end{align*}
Then, $g'(t_m)=0$, which together with \eqref{R8} implies that, for any $t\in(0,t_m)\cup(t_m,+\infty)$,
\begin{align}\label{R2.5.1}
g(t)&<g(t_m)=\frac{t_m^2}3\big(a+b\|w_{b,\lambda}^1\|^2\big)S^{\frac32}+\frac{bt_m^4}{12}S^3 \notag\\
&=\frac14\big(a+b\|w_{b,\lambda}^1\|^2\big)bS^3+\frac1{24}b^3S^6
+\frac1{24}\left[b^2S^4+4\left(a+b\|w_{b,\lambda}^1\|^2\right)S\right]^{\frac32} \notag\\
&=\frac{abS^3}4+\frac{b^3S^6}{24}+\frac{(b^2S^4+4aS)^{\frac32}}{24}+\frac{b^2S^3}4\|w_{b,\lambda}^1\|^2 \notag\\
&\quad+\frac1{24}\left[b^2S^4+4\left(a+b\|w_{b,\lambda}^1\|^2\right)S\right]^{\frac32}
-\frac{(b^2S^4+4aS)^{\frac32}}{24}.
\end{align}
Observing $\bar{\mathcal{I}}_{b,\lambda}(w_{b,\lambda}^1)<0$, we deduce $\bar{\mathcal{I}}_{b,\lambda}(w_{b,\lambda}^1+tu_\varepsilon)<0$ for $t>0$ small enough. Moreover, it is easy to show $\lim\limits_{t\rightarrow+\infty}\bar{\mathcal{I}}_{b,\lambda}(w_{b,\lambda}^1+tu_\varepsilon)=-\infty$.
Consequently, for any $\varepsilon>0$, there exists some $t_\varepsilon>0$ such that
\begin{align*}
\bar{\mathcal{I}}_{b,\lambda}(w_{b,\lambda}^1+t_\varepsilon u_\varepsilon)
=\max_{t\geq0} \bar{\mathcal{I}}_{b,\lambda}(w_{b,\lambda}^1+tu_\varepsilon).
\end{align*}
We can certify the following inequalities
\begin{align}\label{R2.5.2}
\begin{cases}
\vspace{2.00mm}
\displaystyle \left(c+d\right)^6\geq c^6+d^6+6c^5d+6cd^5,  \\
\displaystyle \left(c+d\right)^q\geq c^q+q c^{q-1}d, ~q\in(1,2),
\end{cases}
\forall\ c,d>0.
\end{align}
Note $\mathcal{J}_{b,\lambda}'(w_{b,\lambda}^1)=0$ in $H^{-1}_0(\Omega)$, which implies
\begin{align*}
\left(a+b\|w_{b,\lambda}^1\|^2+t_m^2bS^{\frac32}\right)\int_\Omega\nabla w_{b,\lambda}^1\cdot\nabla u_\varepsilon dx
=\int_\Omega (w_{b,\lambda}^1)^5 u_\varepsilon dx+\lambda\int_\Omega (w_{b,\lambda}^1)^{q-1}u_\varepsilon dx,
\end{align*}
then it follows from \eqref{R8}, \eqref{R2.5.1} and \eqref{R2.5.2} that, for $\varepsilon>0$ sufficiently small,
\begin{align}
\bar{\mathcal{I}}_{b,\lambda}(w_{b,\lambda}^1+t_\varepsilon u_\varepsilon)
&=\frac {at_\varepsilon^2}2\|u_\varepsilon\|^2+\frac {bt_\varepsilon^4}4\|u_\varepsilon\|^4
+\frac a2\|w_{b,\lambda}^1\|^2+\frac b4\|w_{b,\lambda}^1\|^4 +\frac {bt_\varepsilon^2}2 \|w_{b,\lambda}^1\|^2\|u_\varepsilon\|^2\notag\\
&\quad+\left(at_\varepsilon+bt_\varepsilon\|w_{b,\lambda}^1\|^2
+bt_\varepsilon^3\|u_\varepsilon\|^2\right)\int_\Omega\nabla w_{b,\lambda}^1\cdot\nabla u_\varepsilon dx
+bt_\varepsilon^2\left(\int_\Omega\nabla w_{b,\lambda}^1\cdot\nabla u_\varepsilon dx\right)^2  \notag\\
&\quad-\frac16\int_\Omega(w_{b,\lambda}^1+t_\varepsilon u_\varepsilon)^6dx
-\frac\lambda q\int_\Omega(w_{b,\lambda}^1+t_\varepsilon u_\varepsilon)^qdx  \label{R2.5.3} \\
&=\frac{t_\varepsilon^2}2\big(a+b\|w_{b,\lambda}^1\|^2\big)\|u_\varepsilon\|^2+\frac {bt_\varepsilon^4}4\|u_\varepsilon\|^4-\frac{t_\varepsilon^6}6|u_\varepsilon|_6^6 \notag\\
&\quad+\frac a2\|w_{b,\lambda}^1\|^2+\frac b4\|w_{b,\lambda}^1\|^4-\frac16|w_{b,\lambda}^1|_6^6-\frac\lambda q|w_{b,\lambda}^1|_q^q \notag\\
&\quad+bt_\varepsilon\left(t_\varepsilon^2\|u_\varepsilon\|^2-t_m^2S^{\frac32}\right)\int_\Omega\nabla w_{b,\lambda}^1\cdot\nabla u_\varepsilon dx
+bt_\varepsilon^2\left(\int_\Omega\nabla w_{b,\lambda}^1\cdot\nabla u_\varepsilon dx\right)^2 \notag\\
&\quad-\frac16\int_\Omega\left(w_{b,\lambda}^1+t_\varepsilon u_\varepsilon\right)^6-(w_{b,\lambda}^1)^6
-(t_\varepsilon u_\varepsilon)^6-6(w_{b,\lambda}^1)^5t_\varepsilon u_\varepsilon dx \notag\\
&\quad-\frac\lambda q\int_\Omega\left(w_{b,\lambda}^1+t_\varepsilon u_\varepsilon\right)^q
-(w_{b,\lambda}^1)^q-q(w_{b,\lambda}^1)^{q-1}t_\varepsilon u_\varepsilon dx \notag\\
&\leq g(t_\varepsilon)+\frac a2\|w_{b,\lambda}^1\|^2+\frac b4\|w_{b,\lambda}^1\|^4-\frac16|w_{b,\lambda}^1|_6^6-\frac\lambda q|w_{b,\lambda}^1|_q^q \notag\\
&\quad +\frac{t_\varepsilon^2}2\big(a+b\|w_{b,\lambda}^1\|^2\big)O(\varepsilon)
+\frac {bt_\varepsilon^4}2O(\varepsilon)-\frac{t_\varepsilon^6}6O(\varepsilon^3) \notag\\
&\quad+\left[bS^{\frac32}t_\varepsilon\left(t_\varepsilon^2-t_m^2\right)
+bO(\varepsilon)t_\varepsilon^3\right]\int_\Omega\nabla w_{b,\lambda}^1\cdot\nabla u_\varepsilon dx \notag\\
&\quad+bt_\varepsilon^2\left(\int_\Omega\nabla w_{b,\lambda}^1\cdot\nabla u_\varepsilon dx\right)^2
-t_\varepsilon^5\int_\Omega w_{b,\lambda}^1 u_\varepsilon^5 dx.
 \label{R2.5.4}
\end{align}
We claim that there exist $T_1,T_2>0$ such that $T_1\leq  t_\varepsilon\leq  T_2$ for all $\varepsilon>0$ small enough.
Indeed, on the one hand, it follows from \eqref{R2.1.1} that $\bar{\mathcal{I}}_{b,\lambda}(w_{b,\lambda}^1+t_\varepsilon u_\varepsilon)\geq \alpha>0$ for all $\varepsilon>0$.
On the other hand, it follows from \eqref{R2.5.3}, \eqref{R8} and the Cauchy-Schwarz inequality that, for sufficiently small $\varepsilon>0$,
\begin{align*}
\bar{\mathcal{I}}_{b,\lambda}(w_{b,\lambda}^1+t_\varepsilon u_\varepsilon)
&\leq \frac {at_\varepsilon^2}2\|u_\varepsilon\|^2+\frac {bt_\varepsilon^4}4\|u_\varepsilon\|^4
+\frac a2\|w_{b,\lambda}^1\|^2+\frac b4\|w_{b,\lambda}^1\|^4 +\frac {3bt_\varepsilon^2}2 \|w_{b,\lambda}^1\|^2\|u_\varepsilon\|^2\notag\\
&\quad+\left(at_\varepsilon+bt_\varepsilon\|w_{b,\lambda}^1\|^2
+bt_\varepsilon^3\|u_\varepsilon\|^2\right)\|w_{b,\lambda}^1\|\|u_\varepsilon\|
-\frac{t_\varepsilon^6}6|u_\varepsilon|_6^6 \notag\\
&\leq \frac a2\|w_{b,\lambda}^1\|^2+\frac b4\|w_{b,\lambda}^1\|^4+
\sqrt{2}\big(a+b\|w_{b,\lambda}^1\|^2\big)\|w_{b,\lambda}^1\|S^{\frac34}t_\varepsilon \notag\\
&\quad+ \big(a+3b\|w_{b,\lambda}^1\|^2\big)S^{\frac32}t_\varepsilon^2  +2\sqrt{2}b\|w_{b,\lambda}^1\|S^{\frac94}t_\varepsilon^3
+bS^3t_\varepsilon^4
-\frac{S^{\frac32}t_\varepsilon^6}{12}.
\end{align*}
By combining the above two sides, we conclude that there exist some constants $T_1,T_2>0$ such that
\begin{align}\label{R2.5.6}
T_1\leq t_\varepsilon\leq T_2\ \ \ \ \mbox{for}\ \varepsilon>0\ \mbox{small\ enough}.
\end{align}
Let $\lim\limits_{\varepsilon\rightarrow0^+}t_\varepsilon=t_*$ in the sense of subsequence.
If $t_*\neq t_m$, by \eqref{R2.5.4}, \eqref{R2.5.6}, \eqref{R9}, \eqref{R2.5.1}, Lemmas \ref{L2.3} and \ref{L2.3+} we deduce, for $\varepsilon>0$ small enough,
\begin{align*}
\bar{\mathcal{I}}_{b,\lambda}(w_{b,\lambda}^1+t_\varepsilon u_\varepsilon)
&\leq g(t_\varepsilon)+\frac a2\|w_{b,\lambda}^1\|^2+\frac b4\|w_{b,\lambda}^1\|^4-\frac16|w_{b,\lambda}^1|_6^6
-\frac\lambda q|w_{b,\lambda}^1|_q^q+O(\varepsilon)  \notag\\
&\quad+bS^{\frac32}T_2\left(T_2^2+t_m^2\right)\left|\int_\Omega\nabla w_{b,\lambda}^1\cdot\nabla u_\varepsilon dx\right|
+bT_2^2\left|\int_\Omega\nabla w_{b,\lambda}^1\cdot\nabla u_\varepsilon dx\right|^2 \notag\\
&\leq g(t_\varepsilon)+\frac a2\|w_{b,\lambda}^1\|^2+\frac b4\|w_{b,\lambda}^1\|^4-\frac16|w_{b,\lambda}^1|_6^6
-\frac\lambda q|w_{b,\lambda}^1|_q^q+O(\varepsilon^{\frac12}) \\
&<g(t_m)+\frac a2\|w_{b,\lambda}^1\|^2+\frac b4\|w_{b,\lambda}^1\|^4-\frac16|w_{b,\lambda}^1|_6^6
-\frac\lambda q|w_{b,\lambda}^1|_q^q \\
&=c_b+\tilde{m}_{b,\lambda}.
\end{align*}
If $t_*=t_m$, note $\min\limits_{\overline{\mathbb{B}}_{2r}(0)}w_{b,\lambda}^1>0$ since $w_{b,\lambda}^1(\cdot)>0$ in $\Omega$ and $w_{b,\lambda}^1\in C(\Omega)$,
then there holds
\begin{align*}
\lim_{\varepsilon\rightarrow0^+}\varepsilon^{-\frac12}
\left[O(\varepsilon)+bS^{\frac32}T_2\left|t_\varepsilon^2-t_m^2\right|O(\varepsilon^{\frac12})
-\varepsilon^{\frac12}T_1^5\min_{\overline{\mathbb{B}}_{2r}(0)}w_{b,\lambda}^1
\int_{\mathbb{B}_1(0)} \frac{3^{\frac54}}{(1+|x|^2)^{\frac52}} dx\right]<0.
\end{align*}
In view of this, by \eqref{R2.5.4}, \eqref{R9}, \eqref{R2.5.1}, Lemmas \ref{L2.3} and \ref{L2.3+} we deduce
\begin{align*}
\bar{\mathcal{I}}_{b,\lambda}(w_{b,\lambda}^1+t_\varepsilon u_\varepsilon)
&\leq g(t_m)+\frac a2\|w_{b,\lambda}^1\|^2+\frac b4\|w_{b,\lambda}^1\|^4-\frac16|w_{b,\lambda}^1|_6^6-\frac\lambda q|w_{b,\lambda}^1|_q^q \\
&\quad+O(\varepsilon)+bS^{\frac32}T_2\left|t_\varepsilon^2-t_m^2\right|O(\varepsilon^{\frac12})
-T_1^5\int_\Omega w_{b,\lambda}^1 u_\varepsilon^5 dx \\
&\leq\frac{abS^3}4+\frac{b^3S^6}{24}+\frac{(b^2S^4+4aS)^{\frac32}}{24}+\mathcal{J}_{b,\lambda}(w_{b,\lambda}^1) \\
&\quad+O(\varepsilon)+bS^{\frac32}T_2\left|t_\varepsilon^2-t_m^2\right|O(\varepsilon^{\frac12})
-\varepsilon^{\frac12}T_1^5\min_{\overline{\mathbb{B}}_{2r}(0)}w_{b,\lambda}^1
\int_{\mathbb{B}_1(0)} \frac{3^{\frac54}}{(1+|x|^2)^{\frac52}} dx \\
&<c_b+\tilde{m}_{b,\lambda}
\end{align*}
for $\varepsilon>0$ sufficiently small. Based on the above discussion, we complete the proof of this lemma.
\end{prf}

\begin{thm}\label{T2.6}
Let $\lambda_4=\min\{\lambda_2,\lambda_3\}$, then Eq.~\eqref{K1} has a positive solution $u_{b,\lambda}^2\in H_0^1(\Omega)$ satisfying $\mathcal{I}_{b,\lambda}(u_{b,\lambda}^2)>0$ for any $b>0$ and $\lambda\in(0,\lambda_4)$.
\end{thm}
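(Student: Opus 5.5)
Apply the mountain pass theorem to $\bar{\mathcal{I}}_{b,\lambda}$ on paths joining $0$ to a point far along the ray $w_{b,\lambda}^1+tu_\varepsilon$. Fix $b>0$ and $\lambda\in(0,\lambda_4)$.

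\emph{Geometry.} Since $\bar{\mathcal{I}}_{b,\lambda}\geq\mathcal{I}_{b,\lambda}$ pointwise (the negative terms only involve $u^+$), Lemma \ref{L2.1} gives $\inf_{S_\rho}\bar{\mathcal{I}}_{b,\lambda}\geq\alpha>0$. On the other side, $\bar{\mathcal{I}}_{b,\lambda}(0)=0$, and by the proof of Lemma \ref{L2.3} we have $\|w_{b,\lambda}^1\|<\rho$. Also $\bar{\mathcal{I}}_{b,\lambda}(w_{b,\lambda}^1+tu_\varepsilon)\to-\infty$ as $t\to+\infty$. Fix $\varepsilon>0$ small as in Lemma \ref{L2.5} and choose $T$ large, so that $e:=w_{b,\lambda}^1+Tu_\varepsilon$ satisfies $\|e\|>\rho$ and $\bar{\mathcal{I}}_{b,\lambda}(e)<0$. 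Let $\Gamma$ be the set of continuous paths $\gamma:[0,1]\to H_0^1(\Omega)$ with $\gamma(0)=0$ and $\gamma(1)=e$, and set
\[
c:=\inf_{\gamma\in\Gamma}\max_{s\in[0,1]}\bar{\mathcal{I}}_{b,\lambda}(\gamma(s)).
\]
Every path crosses $S_\rho$, so $c\geq\alpha>0$.

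\emph{Upper bound.} Take the concatenated path: first the segment $s\mapsto sw_{b,\lambda}^1$, then $t\mapsto w_{b,\lambda}^1+tu_\varepsilon$ for $t\in[0,T]$. On the first piece the path stays in $B_\rho(0)$. There, $\bar{\mathcal{I}}_{b,\lambda}=\mathcal{I}_{b,\lambda}$ for nonnegative functions, and $\mathcal{I}_{b,\lambda}\leq 0$ along it. To see this, use that $\mathcal{I}_{b,\lambda}(sw)\leq s^q\mathcal{I}_{b,\lambda}(w)$-type reasoning fails in general, so I would instead argue directly: $\frac{d}{ds}$ analysis of the fibering map. This is the fiddly part. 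Alternatively, bound the first piece by $\sup_{B_\rho}\bar{\mathcal{I}}_{b,\lambda}\le \frac a2\rho^2+\frac b4\rho^4$, which is not obviously below $c_b+\tilde m_{b,\lambda}$.

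A cleaner route avoids this. By the fibering structure for small $\lambda$, $s\mapsto\mathcal{I}_{b,\lambda}(sw)$ with $w>0$ decreases, then increases, then decreases. At $s=1$, $w=w^1_{b,\lambda}$ is near the local minimum branch and the value is negative, so on $[0,1]$ the values are at most $\max\{0,\mathcal{I}(w)\}=0$. If this is not clean, I would simply replace $0$ by $w_{b,\lambda}^1$ as the base point of the mountain pass. Indeed $\bar{\mathcal{I}}_{b,\lambda}(w_{b,\lambda}^1)=\mathcal{I}_{b,\lambda}(w_{b,\lambda}^1)\leq\mathcal{J}_{b,\lambda}(w_{b,\lambda}^1)<0<\alpha$ with $\|w^1_{b,\lambda}\|<\rho$, so paths from $w^1_{b,\lambda}$ to $e$ still cross $S_\rho$. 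I will use this version. Lemma \ref{L2.5} then gives directly
\[
0<\alpha\leq c\leq\max_{t\ge0}\bar{\mathcal{I}}_{b,\lambda}(w_{b,\lambda}^1+tu_\varepsilon)<c_b+\tilde m_{b,\lambda}.
\]

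\emph{Conclusion.} Since $\lambda<\lambda_3$, Lemma \ref{L2.4} gives the $(PS)_c$ condition for $\bar{\mathcal{I}}_{b,\lambda}$ at level $c$. The mountain pass theorem then yields a critical point $u^2_{b,\lambda}$ with $\bar{\mathcal{I}}_{b,\lambda}(u^2_{b,\lambda})=c>0$, so $u^2_{b,\lambda}\neq0$.

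Testing $\bar{\mathcal{I}}'_{b,\lambda}(u^2_{b,\lambda})=0$ with $(u^2_{b,\lambda})^-$ gives $(a+b\|u^2_{b,\lambda}\|^2)\|(u^2_{b,\lambda})^-\|^2=0$. Hence $u^2_{b,\lambda}\geq0$, and it solves Eq.~\eqref{K1}. The strong maximum principle gives $u^2_{b,\lambda}>0$. Finally, $\mathcal{I}_{b,\lambda}(u^2_{b,\lambda})=\bar{\mathcal{I}}_{b,\lambda}(u^2_{b,\lambda})=c>0$.

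The main obstacle, namely the strict energy estimate below the compactness threshold, is already handled by Lemma \ref{L2.5}. What remains is the correct choice of base point ($w^1_{b,\lambda}$ rather than $0$), so that the path in Lemma \ref{L2.5} is admissible.
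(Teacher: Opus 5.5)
Your final argument is correct and is essentially the paper's proof. The ingredients match: a mountain pass based at $w^1_{b,\lambda}$ (so the path of Lemma \ref{L2.5} is admissible), the lower bound $c\geq\alpha$ from $\bar{\mathcal{I}}_{b,\lambda}\geq\mathcal{I}_{b,\lambda}$ on $S_\rho$, compactness from Lemma \ref{L2.4}, testing with $(u^2_{b,\lambda})^-$, and the strong maximum principle.

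The differences are cosmetic. You fix the endpoint $e=w^1_{b,\lambda}+Tu_\varepsilon$, whereas the paper allows any $\gamma(1)$ with $\|\gamma(1)\|>\rho$ and $\bar{\mathcal{I}}_{b,\lambda}(\gamma(1))<0$. Your exploratory detour via the base point $0$, with its unproved fibering claim, is not needed and should be deleted.
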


\begin{prf}
By Lemma \ref{L2.1}, there exist some $\rho,\alpha>0$ such that, for any $b>0$ and $\lambda\in(0,\lambda_4)$,
\begin{align}\label{R2.6.1}
\inf\limits_{u\in S_\rho}\bar{\mathcal{I}}_{b,\lambda}(u)
\geq\inf\limits_{u\in S_\rho}\mathcal{I}_{b,\lambda}(u)
\geq \alpha,
\end{align}
where $S_\rho=\left\{u\in H_0^1(\Omega):\|u\|=\rho\right\}$.
Define the minimax class
\begin{align*}
\Gamma_{b,\lambda}=\left\{\gamma\in C\big([0,1],H_0^1(\Omega)\big):\gamma(0)=w_{b,\lambda}^1,\ \|\gamma(1)\|>\rho\ \mbox{and}\ \bar{\mathcal{I}}_{b,\lambda}(\gamma(1))<0\right\}
\end{align*}
and set the corresponding minimax level
\begin{align*}
c_{b,\lambda}=\inf_{\gamma\in\Gamma_{b,\lambda}}\max_{t\in[0,1]}\bar{\mathcal{I}}_{b,\lambda}(\gamma(t)).
\end{align*}
Thanks to \cite[Theorem 2.8]{W}, there exists a sequence $\{u_n\}\subset H_0^1(\Omega)$ such that
$\bar{\mathcal{I}}_{b,\lambda}(u_n)\xrightarrow{n}c_{b,\lambda}$ and
$\bar{\mathcal{I}}_{b,\lambda}'(u_n)\xrightarrow{n}0$ in $H_0^{-1}(\Omega)$.
By \eqref{R2.6.1}, we may deduce $c_{b,\lambda}\geq\alpha>0$.
Moreover, Lemma \ref{L2.5} implies
\begin{align*}
c_{b,\lambda}\leq\max\limits_{t\geq0}\bar{\mathcal{I}}_{b,\lambda}(w_{b,\lambda}^1+tu_\varepsilon)<c_b+\tilde{m}_{b,\lambda}.
\end{align*}
Then, due to Lemma \ref{L2.4}, there exists some $u_{b,\lambda}^2$ such that $u_n\rightarrow u_{b,\lambda}^2$ in $H_0^1(\Omega)$ up to a  subsequence, which implies $\bar{\mathcal{I}}_{b,\lambda}'(u_{b,\lambda}^2)=0$ and $\bar{\mathcal{I}}_{b,\lambda}(u_{b,\lambda}^2)=c_{b,\lambda}$.
Clearly, $u_{b,\lambda}^2\neq0$ and $\langle\bar{\mathcal{I}}_{b,\lambda}'(u_{b,\lambda}^2),(u_{b,\lambda}^2)^-\rangle=0$ implies $(u_{b,\lambda}^2)^-=0$.
Thus, by the strong maximum principle, we derive $u_{b,\lambda}^2(\cdot)>0$. The proof is finished.
\end{prf}

\vspace{8pt}
Now, with the above preliminaries in hand, we end the proof of our main existence result.
\vspace{8pt}

\begin{prf1}
For any $b>0$ and $\lambda\in(0,\lambda_4)$, combining Theorems \ref{T2.2} and \ref{T2.6}, we obtain two positive solutions $u_{b,\lambda}^1$ and $u_{b,\lambda}^2$ of Eq.~$\mathrm{(\ref{K1})}$ satisfying
$$
\mathcal{I}_{b,\lambda}(u_{b,\lambda}^1)=m_{b,\lambda}<0<\mathcal{I}_{b,\lambda}(u_{b,\lambda}^2).
$$
Next, we prove that $u_{b,\lambda}^1$ is indeed a positive ground state solution of Eq.~$\mathrm{(\ref{K1})}$ for $\lambda>0$ small enough. For this aim, it suffices to verify, for $\lambda>0$ small enough,
$$
m_{b,\lambda}=\overline{m}_{b,\lambda}:=\inf\limits_{\mathcal{N}_{b,\lambda}}\mathcal{I}_{b,\lambda},
$$
where
\begin{align*}
\mathcal{N}_{b,\lambda}=\left\{u\in H_0^1(\Omega)\backslash\{0\}: \mathcal{I}_{b,\lambda}'(u)=0\ \mbox{in}\ H_0^{-1}(\Omega)\right\}.
\end{align*}
On the one hand, since $u_{b,\lambda}^1\in\mathcal{N}_{b,\lambda}$, then $\overline{m}_{b,\lambda}\leq \mathcal{I}_{b,\lambda}(u_{b,\lambda}^1)=m_{b,\lambda}<0$.
On the other hand, choose a sequence $\{u_n\}\subset \mathcal{N}_{b,\lambda}$ such that
\begin{align*}
\mathcal{I}_{b,\lambda}(u_n)\xrightarrow{n}\overline{m}_{b,\lambda}\ \ \ \ \mbox{and}\ \ \ \ \mathcal{I}_{b,\lambda}'(u_n)=0\ \mbox{in}\ H_0^{-1}(\Omega).
\end{align*}
It follows from the H$\ddot{\mbox{o}}$lder and Sobolev inequalities that
\begin{align}\label{P1.1+}
\overline{m}_{b,\lambda}+o(1)
&=\mathcal{I}_{b,\lambda}(u_n)-\frac14\left\langle\mathcal{I}_{b,\lambda}'(u_n),u_n\right\rangle \notag\\
&=\frac a4\|u_n\|^2+\frac1{12}|u_n|_6^6-\frac{\lambda(4-q)}{4q}|u_n|_q^q \notag\\
&\geq \frac a4\|u_n\|^2-\frac{\lambda(4-q)|\Omega|^{\frac{6-q}6}}{4qS^{\frac q2}}\|u_n\|^q.
\end{align}
Take
\begin{align*}
\Lambda=\min\left\{\lambda_4,\frac{aqS^{\frac q2}\rho^{2-q}}{(4-q)|\Omega|^{\frac{6-q}6}}\right\}
\end{align*}
and let $\lambda\in(0,\Lambda)$ from now on.
Observing $\overline{m}_{b,\lambda}<0$, we may deduce from \eqref{P1.1+} that $\sup\limits_n\|u_n\|\leq\rho$.
Then, there exists some $u\in H_0^1(\Omega)$ such that, in the sense of subsequence,
\begin{align}\label{P1.2+}
u_n\rightharpoonup u\ \mbox{in}\ H_0^1(\Omega);\ \ \ \
u_n\rightarrow u\ \mbox{in}\ L^p(\Omega),\ 1\leq p<6;\ \ \ \
u_n(x)\rightarrow u(x)\ \mbox{a.e.}\ \mbox{in}\ \Omega.
\end{align}
Denote $\bar{u}_n=u_n-u$ and set $\|\bar{u}_n\|\rightarrow\bar{\kappa}$, $|\bar{u}_n|_6\rightarrow\bar{\nu}$ up to a subsequence.
It is easy to show $0\leq\bar{\nu}\leq S^{-\frac12}\bar{\kappa}$.
By \eqref{P1.2+} and the Br$\acute{\mbox{e}}$zis-Lieb lemma, we deduce
\begin{align}\label{P1.3+}
0&=\left\langle\mathcal{I}_{b,\lambda}'(u_n),u_n\right\rangle+o(1) \notag\\
&=a\|u\|^2+b\|u\|^4-|u|_6^6-\lambda|u|_q^q \notag\\
&\quad+a\bar{\kappa}^2+2b\bar{\kappa}^2\|u\|^2+b\bar{\kappa}^4-\bar{\nu}^6
\end{align}
and
\begin{align}\label{P1.4+}
0=\left\langle\mathcal{I}_{b,\lambda}'(u_n),u\right\rangle +o(1)
=a\|u\|^2+b\|u\|^4-|u|_6^6-\lambda|u|_q^q+b\bar{\kappa}^2\|u\|^2.
\end{align}
Subtracting \eqref{P1.4+} from \eqref{P1.3+} yields
\begin{align*}
0&=\bar{\nu}^6-b\bar{\kappa}^4-\left(a+b\|u\|^2\right)\bar{\kappa}^2 \\
&\leq S^{-3}\bar{\kappa}^6-b\bar{\kappa}^4-\left(a+b\|u\|^2\right)\bar{\kappa}^2,
\end{align*}
which together with $\bar{\kappa}\geq0$ implies $\bar{\kappa}=0$ or
\begin{align}\label{P1.5+}
\bar{\kappa}^2\geq \frac{bS^3+S\sqrt{b^2S^4+4(a+b\|u\|^2)S}}2.
\end{align}
We claim $\bar{\kappa}=0$. Otherwise, the weakly lower semicontinuity of norm implies $\|u\|\leq\liminf\limits_n\|u_n\|\leq\rho$,
then we deduce from \eqref{P1.2+}$-$\eqref{P1.5+}, the Br$\acute{\mbox{e}}$zis-Lieb lemma and the definition of $m_{b,\lambda}$ that
\begin{align*}
\overline{m}_{b,\lambda}
&=\mathcal{I}_{b,\lambda}(u_n)+o(1) \\
&=\mathcal{I}_{b,\lambda}(u)
+\frac 12\left(a+b\|u\|^2\right)\bar{\kappa}^2+\frac b4\bar{\kappa}^4-\frac16\bar{\nu}^6 \\
&\geq\mathcal{I}_{b,\lambda}(u)+\frac 13\left(a+b\|u\|^2\right)\bar{\kappa}^2+\frac b{12}\bar{\kappa}^4 \\
&> m_{b,\lambda},
\end{align*}
which is impossible since $\overline{m}_{b,\lambda}\leq m_{b,\lambda}$.
Hence, $\bar{\kappa}=0$. That is, up to a subsequence, $u_n\rightarrow u$ in~$H_0^1(\Omega)$.
Naturally, $\mathcal{I}_{b,\lambda}(u)=\overline{m}_{b,\lambda}$ and $\mathcal{I}_{b,\lambda}'(u)=0$ in $H_0^{-1}(\Omega)$.
By $\|u\|\leq\rho$, there holds $\overline{m}_{b,\lambda}=\mathcal{I}_{b,\lambda}(u)\geq m_{b,\lambda}$.
Now, combining the above two hands, we derive $\overline{m}_{b,\lambda}=m_{b,\lambda}$ for any $\lambda\in(0,\Lambda)$.
Thus $u_{b,\lambda}^1$ is a positive ground state solution of Eq.~$\mathrm{(\ref{K1})}$ once $\lambda\in(0,\Lambda)$.
The proof of Theorem \ref{T1} is completed.
\end{prf1}

\section{Proof of Theorems \ref{T2} and \ref{T3}}\label{Se2}

With Theorem \ref{T1} in hand, we further study the asymptotic behaviour of positive solutions as $b\rightarrow0^+$ and $\lambda\rightarrow0^+$ in order.
For Eq.~\eqref{SC1}, the natural functional $\mathcal{I}_{0,\lambda}\in C^1(H_0^1(\Omega),\mathbb{R})$ is defined by
\begin{align*}
\mathcal{I}_{0,\lambda}(u)=\frac a2\|u\|^2-\frac16|u|_6^6-\frac\lambda q|u|_q^q,\ \ \ \ \forall\ u\in H_0^1(\Omega).
\end{align*}

\begin{prf2}
Due to the proof of Theorem \ref{T1}, we conclude that $\{u_{b_n,\lambda}^1\}$ and $\{u_{b_n,\lambda}^2\}$~satisfy
\begin{align}\label{P1.3.1}
\mathcal{I}_{b_n,\lambda}(u_{b_n,\lambda}^1)=\overline{m}_{b_n,\lambda}<0
<\alpha\leq\mathcal{I}_{b_n,\lambda}(u_{b_n,\lambda}^2)=c_{b_n,\lambda}<c_{b_n}+\hat{m}_{b_n,\lambda}.
\end{align}
Take $u\in H_0^1(\Omega)$ such that $|u|_q=1$ and set
$$
u_\lambda=\Big(\frac{2-q}2\Big)^{\frac1q}\left[\frac{(6-q)|\Omega|^{\frac{6-q}6}}{4(aS)^{\frac q2}}\right]^{\frac2{q(2-q)}}\lambda^{\frac1{2-q}}u.
$$
It easily follows from \eqref{R2.1.2} that
\begin{align}\label{P1.3.6}
\limsup_{n\to\infty}\hat{m}_{b_n,\lambda}
&\leq\limsup_{n\to\infty}\lim\limits_{t\rightarrow0^+}\frac{\mathcal{I}_{b_n,\lambda}(tu_\lambda)}{t^q} \notag\\
&\leq \limsup_{n\to\infty} \lim\limits_{t\rightarrow0^+}
\left[\frac {at^{2-q}}2\|u_\lambda\|^2+\frac {b_nt^{4-q}}4\|u_\lambda\|^4-\frac{t^{6-q}}6|u_\lambda|_6^6-\frac{\lambda} q|u_\lambda|_q^q\right] \notag\\
&=-\frac\lambda q|u_\lambda|_q^q
<-\frac{2-q}{3q}\left[\frac{\lambda(6-q)|\Omega|^{\frac{6-q}6}}{4(aS)^{\frac q2}}\right]^{\frac2{2-q}},
\end{align}
which together with \eqref{R2.4.0} implies
\begin{align}\label{P1.3.2}
\limsup_{n\to\infty}\left(c_{b_n}+\hat{m}_{b_n,\lambda}\right)<\frac13(aS)^{\frac32}
-\frac{2-q}{3q}\left[\frac{\lambda(6-q)|\Omega|^{\frac{6-q}6}}{4(aS)^{\frac q2}}\right]^{\frac2{2-q}}.
\end{align}
Observe $\mathcal{I}_{b_n,\lambda}'(u_{b_n,\lambda}^i)=0$, where $i=1,2$ and we acknowledge this notation from now on,
by \eqref{P1.3.2}, the H$\ddot{\mbox{o}}$lder and Sobolev inequalities, there results
\begin{align*}
\frac13(aS)^{\frac32}&>\limsup_{n\to\infty}\Big[\mathcal{I}_{b_n,\lambda}(u_{b_n,\lambda}^i)
-\frac14\left\langle\mathcal{I}_{b_n,\lambda}'(u_{b_n,\lambda}^i),u_{b_n,\lambda}^i\right\rangle\Big] \\
&>\frac a4\|u_{b_n,\lambda}^i\|^2+\frac1{12}|u_{b_n,\lambda}^i|_6^6-\frac{\lambda(4-q)}{4q}|u_{b_n,\lambda}^i|_q^q\\
&\geq\frac a4\|u_{b_n,\lambda}^i\|^2-\frac{\lambda(4-q)|\Omega|^{\frac{6-q}6}}{4qS^{\frac q2}}\|u_{b_n,\lambda}^i\|^q.
\end{align*}
In view of this fact, $\{u_{b_n,\lambda}^i\}$ is bounded.
Then, there exists $u_\lambda^i \in H_0^1(\Omega)$ such that, up to a subsequence,
\begin{align}\label{P1.3.3}
\begin{cases}
\displaystyle u_{b_n,\lambda}^i \overset{n}\rightharpoonup u_\lambda^i\ \mbox{in}\ H_0^1(\Omega);\\
\displaystyle u_{b_n,\lambda}^i\xrightarrow{n}u_\lambda^i\ \mbox{in}\ L^p(\Omega),\ 1\leq p<6;\\
\displaystyle u_{b_n,\lambda}^i(x)\xrightarrow{n}u_\lambda^i(x)\ \mbox{a.e.}\ \mbox{in}\ \Omega.
\end{cases}
\end{align}
Since \eqref{P1.3.3} implies that $(u_{b_n,\lambda}^i)^5\overset{n}\rightharpoonup (u_\lambda^i)^5$ in $L^{\frac65}(\Omega)$ and $(u_{b_n,\lambda}^i)^{q-1}\overset{n}\rightharpoonup (u_\lambda^i)^{q-1}$ in $L^{\frac2{q-1}}(\Omega)$ in the sense of subsequence, from \eqref{P1.3.3} we may deduce
\begin{align}\label{P1.3.4}
0 = \left\langle\mathcal{I}_{b_n,\lambda}'(u_{b_n,\lambda}^i), u_\lambda^i\right\rangle+o(1)
= a\|u_\lambda^i\|^2-|u_\lambda^i|_6^6-\lambda|u_\lambda^i|_q^q.
\end{align}
Set $v_n^i=u_{b_n,\lambda}^i-u_\lambda^i$, $\limsup\limits_n\|v_n^i\|=\kappa_i$ and $\limsup\limits_n|v_n^i|_6=\nu_i$, due to \eqref{P1.3.3} and the Br$\acute{\mbox{e}}$zis-Lieb lemma,
\begin{align}\label{P1.3.5}
0 = \left\langle\mathcal{I}_{b_n,\lambda}'(u_{b_n,\lambda}^i), u_{b_n,\lambda}^i\right\rangle+o(1)
= a\|u_\lambda^i\|^2-|u_\lambda^i|_6^6-\lambda|u_\lambda^i|_q^q+ a\kappa_i^2-\nu_i^6.
\end{align}
Subtracting \eqref{P1.3.4} from \eqref{P1.3.5}, we derive $a\kappa_i^2=\nu_i^6$, which and the Sobolev inequality imply $a\kappa_i^2\leq S^{-3}\kappa_i^6$.
Consequently, $\kappa_i=0$ or $\kappa_i^2\geq(aS^3)^{\frac12}$. We assert $\kappa_i=0$. If not, it follows from \eqref{P1.3.1}$-$\eqref{P1.3.5}, the Br$\acute{\mbox{e}}$zis-Lieb lemma, the H$\ddot{\mbox{o}}$lder and Sobolev inequalities that
\begin{align*}
&\frac13(aS)^{\frac32}-\frac{2-q}{3q}\left[\frac{\lambda(6-q)|\Omega|^{\frac{6-q}6}}{4(aS)^{\frac q2}}\right]^{\frac2{2-q}} \\
&\quad\quad > \limsup_{n\to\infty} \mathcal{I}_{b_n,\lambda}(u_{b_n,\lambda}^i)
=\frac a3\|u_\lambda^i\|^2-\frac{\lambda(6-q)}{6q}|u_\lambda^i|_q^q+\frac a3\kappa_i^2 \\
&\quad\quad\geq\frac a3\|u_\lambda^i\|^2-\frac{\lambda(6-q)|\Omega|^{\frac{6-q}6}}{6qS^{\frac q2}}\|u_\lambda^i\|^q+\frac a3\kappa_i^2 \\
&\quad\quad\geq\min_{t\geq0}\left[\frac a3t^2-\frac{\lambda(6-q)|\Omega|^{\frac{6-q}6}}{6qS^{\frac q2}}t^q\right]+\frac13(aS)^{\frac32} \\
&\quad\quad= \frac13(aS)^{\frac32}-\frac{2-q}{3q}\left[\frac{\lambda(6-q)|\Omega|^{\frac{6-q}6}}{4(aS)^{\frac q2}}\right]^{\frac2{2-q}},
\end{align*}
a contradiction. Naturally, $\kappa_i=0$. That is, $u_{b_n,\lambda}^i\xrightarrow{n}u_\lambda^i$ in $H_0^1(\Omega)$ in the sense of subsequence.
Then, by \eqref{P1.3.1} and \eqref{P1.3.3} we may deduce $\mathcal{I}_{0,\lambda}'(u_\lambda^1)=\mathcal{I}_{0,\lambda}'(u_\lambda^2)=0$ and
\begin{align*}
&\mathcal{I}_{0,\lambda}(u_\lambda^1)=\limsup_{n\to\infty}\overline{m}_{b_n,\lambda}<0, \\
&\mathcal{I}_{0,\lambda}(u_\lambda^2)=\limsup_{n\to\infty}\mathcal{I}_{b_n,\lambda}(u_{b_n,\lambda}^2)\geq\alpha>0,
\end{align*}
which implies $u_\lambda^1\neq0$ and $u_\lambda^2\neq0$.
Moreover, recalling $u_\lambda^1(\cdot)\geq0$ and $u_\lambda^2(\cdot)\geq0$ in $\Omega$,
we deduce from the strong maximum principle that $u_\lambda^1>0$ and $u_\lambda^2>0$.
Thus, $u_\lambda^1$ and $u_\lambda^2$ are positive solutions of Eq.~\eqref{SC1}, where $u_\lambda^1$ admits negative energy while $u_\lambda^2$ admits positive energy. The proof is completed.
\end{prf2}

\vspace{8pt}

\begin{prf3}
According to the proof of Theorem \ref{T1}, we know that $u_{b,\lambda_n}^1(\cdot)>0$ satisfies
$$
\mathcal{I}_{b,\lambda_n}'(u_{b,\lambda_n}^1)=0\ \  \mbox{and}\ \
\mathcal{I}_{b,\lambda_n}(u_{b,\lambda_n}^1)=\overline{m}_{b,\lambda_n}\ \ \ \mbox{for\ all}\ n.
$$
We deduce from the H$\ddot{\mbox{o}}$lder and Sobolev inequalities that
\begin{align*}
0&>\overline{m}_{b,\lambda_n}
\\&=\mathcal{I}_{b,\lambda_n}(u_{b,\lambda_n}^1)-\frac14\left\langle\mathcal{I}_{b,\lambda_n}'(u_{b,\lambda_n}^1),u_{b,\lambda_n}^1\right\rangle \\
&\geq \frac a4\|u_{b,\lambda_n}^1\|^2+\frac1{12}|u_{b,\lambda_n}^1|_6^6-\frac{\lambda_n(4-q)}{4q}|u_{b,\lambda_n}^1|_q^q \\
&\geq \frac a4\|u_{b,\lambda_n}^1\|^2-\frac{\lambda_n(4-q)|\Omega|^{\frac{6-q}6}}{4qS^{\frac q2}}\|u_{b,\lambda_n}^1\|^q,
\end{align*}
which implies $\{\|u_{b,\lambda_n}^1\|\}$ is bounded and then $\|u_{b,\lambda_n}^1\|\xrightarrow{n}0$.

By the proof of Theorem \ref{T2.6}, there~holds
\begin{align}\label{R11}
\mathcal{I}_{b,\lambda_n}(u_{b,\lambda_n}^2)=c_{b,\lambda_n}\geq\alpha>0 \ \ \ \mbox{for\ all}\ n.
\end{align}
Using the similar elliptic estimates to the proof of Lemma \ref{L2.3}, we derive $u_{b,\lambda_n}^2\in C^{1,\alpha}(\overline{\Omega})$.
Consequently, $u_{b,\lambda_n}^2\in L^\infty(\Omega)$. We claim $|u_{b,\lambda_n}^2|_\infty\xrightarrow{n}+\infty$.
Otherwise, $\sup_n|u_{b,\lambda_n}^2|_\infty<+\infty$. Then it follows from the $L^p$-theory, Morrey's embedding theorem and the Schauder estimate that $\{u_{b,\lambda_n}^2\}$ is bounded in~$C^{2,\alpha}(\overline{\Omega})$.
Moreover, thanks to the Ascoli-Arzel$\grave{\mbox{a}}$ theorem, there exists some $u_b$ such that $u_{b,\lambda_n}^2\xrightarrow{n}u_b$ in $C^2(\overline{\Omega})$ in the sense of subsequence. Naturally, $u_b$ satisfies
\begin{align}\label{P1.5.1}
\begin{cases}
\displaystyle -\Big(a+b\int_\Omega|\nabla u_b|^2dx\Big)\Delta u_b=|u_b|^4u_b  &\mbox{in}\ \Omega, \\
\displaystyle u_b=0 &\mbox{on}\ \partial\Omega,
\end{cases}
\end{align}
which together with the result \cite[Theorem 1.5: Eq.~\eqref{P1.5.1} has no nontrivial solution if $\Omega$ is strictly~star-shaped]{N} implies $u_b=0$.
As a result, $u_{b,\lambda_n}^2\xrightarrow{n}0$ in $H_0^1(\Omega)$ and then $\mathcal{I}_{b,\lambda_n}(u_{b,\lambda_n}^2)\xrightarrow{n}0$,
which contradicts with \eqref{R11}. That is, our claim $|u_{b,\lambda_n}^2|_\infty\xrightarrow{n}+\infty$ is true.
Thus the proof of Theorem \ref{T3} is completed.
\end{prf3}

\vspace{0.4cm}
\noindent{\bf Data availability}~Data sharing is not applicable to this article as no datasets were generated or analysed during the current study.

\vspace{0.4cm}
\noindent{\bf Conflict of interest}~The authors declare that there is no conflict of interest.


\begin{thebibliography}{30}
\setlength{\itemsep}{-0.20pt}
\bibitem{ABC} A. Ambrosetti, H. Br$\acute{\mbox{e}}$zis, G. Cerami, Combined effects of concave and convex nonlinearities in some elliptic problems, J. Funct. Anal. 122 (1994) 519--477.
\bibitem{BN} H. Br$\acute{\mbox{e}}$zis, L. Nirenberg, Positive solutions of nonlinear elliptic equations involving critical Sobolev exponents, Commun. Pure Appl. Math. 36 (1983) 437--477.
\bibitem{CXW} X. Cao, J. Xu, J. Wang, Multiple positive solutions for Kirchhoff type problems involving concave and convex nonlinearities in $\mathbb{R}^3$, Electron. J. Differential Equations 2016 (2016) 16 pp.
\bibitem{CKW} C.-Y. Chen, Y.-C. Kuo, T.-F. Wu, The Nehari manifold for a Kirchhoff type problem involving sign-changing weight functions, J. Differential Equations. 250 (2011) 1876--1908.
\bibitem{CT} S.T. Chen, X.H. Tang, Normalized solutions for Kirchhoff equations with Sobolev critical exponent and mixed nonlinearities, Math. Ann. 391 (2025) 2783--2836.
\bibitem{CWL} B.T. Cheng, X. Wu, J. Liu, Multiple solutions for a class of Kirchhoff type problems with concave nonlinearity, Nonlinear Differential Equations Appl. 19 (2012) 521--537.
\bibitem{K} G. Kirchhoff, Mechanik, Teubner, Leipzig, 1883.
\bibitem{LLG} C.-Y. Lei, G.-S. Liu, L.-T. Guo, Multiple positive solutions for a Kirchhoff type problem with a critical nonlinearity, Nonlinear Anal. Real World Appl. 31 (2016) 343--355.
\bibitem{LPKT} J.-F. Liao, Y. Pu, X.-F. Ke, C.-L. Tang, Multiple positive solutions for kirchhoff type problems involving concave-convex nonlinearities, Commun. Pure Appl. Anal. 16 (2017) 2157--2175.
\bibitem{L} J.L. Lions, On some questions in boundary value problems of mathematical physics, In: Contemporary developments in continuum mechanics and partial differential equations (Proc. Internat. Sympos., Inst. Mat., Univ. Fed. Rio de Janeiro, Rio de Janeiro), North-Holland Math. Stud. 30 (1977) 284--346.
\bibitem{N} D. Naimen, Positive solutions of Kirchhoff type elliptic equations involving a critical Sobolev exponent, NoDEA Nonlinear Differential Equations Appl. 21 (2014) 885--914.
\bibitem{P} S.I. Poho$\check{\mbox{z}}$aev, Eigenfunctions of the equation $\Delta u+\lambda f(u)=0$, Soviet Math. Dokl. 6 (1965) 1408--1411 (translated from the Russian Dokl. Akad. Nauk SSSR, 165 (1965) 36--39).
\bibitem{SL} Y.J. Sun, X. Liu, Existence of positive solutions for Kirchhoff type problems with critical exponent, J. Partial Differ. Equ. 25 (2012) 187--198.
\bibitem{W} M. Willem, Minimax Theorems. Birkh$\ddot{\mbox a}$user, Basel (1996).
\bibitem{XWT} Q.-L. Xie, X.-P. Wu, C.-L. Tang, Existence and multiplicity of solutions for Kirchhoff type problem with critical exponent, Commun. Pure Appl. Anal. 12 (2013) 2773--2786.
\bibitem{ZL} C.H. Zhang, Z.S. Liu, Multiplicity of nontrivial solutions for a critical degenerate Kirchhoff type problem, Appl. Math. Lett. 69 (2017) 87--93
\bibitem{ZT} X.-J. Zhong, C.-L. Tang, Multiple positive solutions to a Kirchhoff type problem involving a critical nonlinearity, Comput. Math. Appl. 72 (2016) 2865--2877.
\bibitem{ZF} X. Zhu, H. Fan, The sign-changing solutions for a class of Kirchhoff-type problems with critical Sobolev exponents in bounded domains, Z. Angew. Math. Phys. 75 (2024) 29 pp.

\end{thebibliography}
\end{document}